\newcommand{\email}[1]{\href{mailto:#1}{\nolinkurl{#1}}}
\definecolor{labelkey}{rgb}{0,0.08,0.45}
\definecolor{refkey}{rgb}{0,0.6,0.0}
\definecolor{Brown}{rgb}{0.45,0.0,0.05}
\definecolor{dgreen}{rgb}{0.00,0.59,0.00}
\definecolor{dblue}{rgb}{0,0.08,0.75}
\renewcommand{\leq}{\ensuremath{\leqslant}}
\renewcommand{\geq}{\ensuremath{\geqslant}}
\newcommand{\Frac}[2]{\displaystyle{\frac{#1}{#2}}} 
\newcommand{\scal}[2]{{\left\langle{{#1}\mid{#2}}\right\rangle}}
\newcommand{\abscal}[2]{\left|\left\langle{{#1}\mid{#2}}%
\right\rangle\right|} 
\newcommand{\menge}[2]{\big\{{#1}~\big |~{#2}\big\}} 
\newcommand{\Menge}[2]{\left\{{#1}~\Big|~{#2}\right\}}
\newcommand{\HH}{\ensuremath{{\mathcal H}}}
\newcommand{\GG}{\ensuremath{{\mathcal G}}}
\newcommand{\XX}{\ensuremath{\mathcal{X}}}
\newcommand{\emp}{\ensuremath{{\varnothing}}}
\newcommand{\Id}{\ensuremath{\operatorname{Id}}}
\newcommand{\RR}{\ensuremath{\mathbb{R}}}
\newcommand{\RP}{\ensuremath{\left[0,+\infty\right[}}
\newcommand{\RPP}{\ensuremath{\left]0,+\infty\right[}}
\newcommand{\RX}{\ensuremath{\left]-\infty,+\infty\right]}}
\newcommand{\NN}{\ensuremath{\mathbb N}}
\newcommand{\JJ}{\ensuremath{\mathbb J}}
\newcommand{\bK}{\ensuremath{\mathbb K}}
\newcommand{\exi}{\ensuremath{\exists\,}}
\newcommand{\ran}{\ensuremath{\text{\rm ran}\,}}
\newcommand{\pinf}{\ensuremath{{+\infty}}}
\newcommand{\dom}{\ensuremath{\text{\rm dom}\,}}
\newcommand{\proj}{\ensuremath{\text{\rm proj}}}
\newcommand{\prox}{\ensuremath{\text{\rm prox}}}
\newcommand{\gra}{\ensuremath{\text{\rm gra}\,}}
\newcommand{\conv}{\ensuremath{\text{\rm conv}\,}}
\newcommand{\rzeroun}{\ensuremath{\left]0,1\right]}} 
\newcommand{\czeroun}{\ensuremath{\left[0,1\right]}} 
\definecolor{dgreen}{rgb}{0.00,0.59,0.00}
\newtheorem{theorem}{Theorem}[section]
\newtheorem{lemma}[theorem]{Lemma}
\newtheorem{corollary}[theorem]{Corollary}
\newtheorem{proposition}[theorem]{Proposition}
\theoremstyle{plain}{\theorembodyfont{\rmfamily}%
\newtheorem{model}[theorem]{Model}}
\theoremstyle{plain}{\theorembodyfont{\rmfamily}%
\newtheorem{notation}[theorem]{Notation}}
\theoremstyle{plain}{\theorembodyfont{\rmfamily}%
}
\theoremstyle{plain}{\theorembodyfont{\rmfamily}%
\newtheorem{assumption}[theorem]{Assumption}}
\theoremstyle{plain}{\theorembodyfont{\rmfamily}%
}
\theoremstyle{plain}{\theorembodyfont{\rmfamily}%
}
\theoremstyle{plain}{\theorembodyfont{\rmfamily}%
\newtheorem{example}[theorem]{Example}}
\theoremstyle{plain}{\theorembodyfont{\rmfamily}%
\newtheorem{remark}[theorem]{Remark}}
\theoremstyle{plain}{\theorembodyfont{\rmfamily}%
}
\theoremstyle{plain}{\theorembodyfont{\rmfamily}%
}
\numberwithin{equation}{section}
\begin{document}
\title{\sffamily \vskip -9mm 
Lipschitz Certificates for Layered Network Structures Driven 
by Averaged Activation Operators\thanks{Contact author: 
P. L. Combettes, \email{plc@math.ncsu.edu}, phone: +1 919 515 2671.
The work of P. L. Combettes was supported by the 
National Science Foundation under grant CCF-1715671. 
The work of J.-C. Pesquet was supported by Institut 
Universitaire de France.}}
\author{Patrick L. Combettes$^1$ and Jean-Christophe Pesquet$^2$
\\[4mm]
\small
\small $\!^1$North Carolina State University,
Department of Mathematics, Raleigh, NC 27695-8205, USA\\
\small\email{plc@math.ncsu.edu}\\[2mm]
\small $\!^2$CentraleSup\'elec, Inria, Universit\'e Paris-Saclay,
Center for Visual Computing, 91190 Gif sur Yvette, France\\
\small\email{jean-christophe@pesquet.eu}
}
\date{~}
\maketitle

\begin{abstract}
Obtaining sharp Lipschitz constants for feed-forward neural networks
is essential to assess their robustness in the face of
perturbations of their inputs. We derive such constants in the
context of a general layered network model involving compositions
of nonexpansive averaged operators and affine operators. 
By exploiting this architecture, our analysis finely
captures the interactions between the layers, yielding tighter
Lipschitz constants than those resulting from the product of
individual bounds for groups of layers. 
The proposed framework
is shown to cover in particular many practical instances
encountered in feed-forward neural networks.
Our Lipschitz constant estimates are further
improved in the case of structures employing scalar nonlinear
functions, which include standard convolutional networks as special
cases. 
\end{abstract}

%{\bfseries Keywords.} 

\section{Introduction}

Artificial neural networks are becoming increasingly central tools
in tasks such as learning, modeling, data processing, and decision
making. As first noted in \cite{Szeg13}, neural networks are
vulnerable to adversarial examples which, though close to other 
data inputs, lead to very different outputs. This potential lack of
stability makes the networks vulnerable and unreliable in key
application areas; see, for instance, 
\cite{Akta18,Good15,Kreu18} and the references therein. 
To protect networks against such instabilities various
techniques have been explored \cite{Madr18,Pape16,Ragh18,Wong18}. 
Although these defense strategies may be effective 
in certain scenarios, they do not provide formal guarantees of 
robustness for general networks and they have been shown to be
breakable by new attacks; see, for instance,
\cite{Atha18,Car17b}.

It has been acknowledged for some time that the Lipschitz behavior
of a network plays a key role in the analysis of its robustness
\cite{Szeg13}. Simply put, if a layered network is modeled by
an operator $T$ acting between normed spaces, with Lipschitz
constant $\theta$, given an input $x$ and a perturbation $z$, we 
can majorize the perturbation on the output via the inequality
\begin{equation}
\|T(x+z)-Tx\|\leq\theta\|z\|.
\end{equation}
Thus $\theta$ can be used as a certificate of robustness of the
network provided that it is tightly estimated. 
Lipschitz regularity is also an important ingredient 
in the derivation of generalization bounds and approximation bounds 
\cite{Bart17,Bolc19,Soko17}, and
of reachability conditions \cite{Ruan18}.
In \cite{Szeg13} the estimation of $\theta$ is
performed by evaluating the Lipschitz constant of the layers
individually and then defining $\theta$ as the product of these
constants, which typically yields pessimistic bounds. Lipschitz
constants have also been computed for specific situations, e.g.,
\cite{Bala17,Hein17,Scam18,Tsuz18}. Overall, however,
deriving analytically accurate constants for
general contexts remains an open problem. The objective of
the present paper is to address this question for 
a general class of layered networks. 
Mathematically, our network model is described as an
alternation of affine and nonlinear operators. 
This type of structure also arises in variational and equilibrium
problems, as well as in network science, e.g., 
\cite{Bric13,2019,Facc03,Yipa19}.
Adopting the same terminology as in the neural network literature, 
where they model the activity of neurons,
the nonlinear operators will be called activation operators.
Our stability analysis 
focuses on the following $m$-layer model, in which 
the activation operators are averaged nonexpansive operators (see
Fig.~\ref{fig:1}). Recall that an operator $R\colon\HH\to\HH$
acting on a Hilbert space $\HH$ is $\alpha$-averaged for some
$\alpha\in[0,1]$ if there exists a nonexpansive (i.e.,
$1$-Lipschitzian) operator $Q\colon\HH\to\HH$ such that
\begin{equation}
\label{e:averaged}
R=(1-\alpha)\Id+\alpha Q.
\end{equation}
In other words, $R=\Id+\alpha(Q-\Id)$ is an underrelaxation of 
a nonexpansive operator (see \cite{Livre1} for a detailed account).
This class of operators was introduced in \cite{Bail78} 
and shown in \cite{Opti04} to model various problems in nonlinear 
analysis as it includes common operators such as projection
operators, proximity operators, resolvents of monotone operators, 
reflection operators, gradient step operators, and various
combinations thereof. Recent theoretical developments and
applications to data science include 
\cite{Bayr16,Bert18,Borw17,Botr17,Brav18,Siop17,%
Cond13,Koyu19,Mour18,Suny19,Yama17,Yipa19}.

\begin{figure}
\label{fig:1}
\scalebox{0.75} % Change this value to rescale the drawing.
{
\begin{pspicture}(-0.75,-1.7)(15.9,2.1)
\psline[linewidth=0.04cm,arrowsize=2.2mm]{->}(0.35,0.0)(1.0,0.0)
\psline[linewidth=0.04cm,arrowsize=2.2mm]{->}(3.0,0.0)(3.65,0.0)
\psline[linewidth=0.04cm,arrowsize=2.2mm]{->}(4.35,0.0)(5.0,0.0)
\psline[linewidth=0.04cm,arrowsize=2.2mm]{->}(4.0,1.2)(4.0,0.36)
\psframe[linewidth=0.04,dimen=outer](1.0,-1.0)(3.0,1.0)
\pscircle[linewidth=0.04,dimen=outer](4.0,0.0){0.35}
\rput(0.15,0.0){\large$\boldsymbol{{x}}$}
\rput(2.0,0.0){\large$\boldsymbol{{W_1}}$}
\rput(4.0,0.0){\large$\boldsymbol{+}$}
\rput(4.0,1.5){\large$\boldsymbol{{b_1}}$}
\rput(6.0,0.0){\large$\boldsymbol{{R_1}}$}
\psframe[linewidth=0.04,dimen=outer](5.0,-1.0)(7.0,1.0)
\psline[linewidth=0.04cm,arrowsize=2.2mm]{->}(7.00,0.0)(7.65,0.0)
\rput(8.5,0.0){\large$\boldsymbol{{\cdots}}$}
\psline[linewidth=0.04cm,arrowsize=2.2mm]{->}(9.35,0.0)(10.0,0.0)
\psline[linewidth=0.04cm,arrowsize=2.2mm]{->}(12.0,0.0)(12.65,0.0)
\psline[linewidth=0.04cm,arrowsize=2.2mm]{->}(13.35,0.0)(14.0,0.0)
\psline[linewidth=0.04cm,arrowsize=2.2mm]{->}(13.0,1.2)(13.0,0.36)
\psframe[linewidth=0.04,dimen=outer](10.0,-1.0)(12.0,1.0)
\pscircle[linewidth=0.04,dimen=outer](13.0,0.0){0.35}
\rput(11.0,0.0){\large$\boldsymbol{{W_m}}$}
\rput(13.0,0.0){\large$\boldsymbol{+}$}
\rput(13.0,1.5){\large$\boldsymbol{{b_m}}$}
\rput(15.0,0.0){\large$\boldsymbol{{R_m}}$}
\rput(17.0,0.05){\large$\boldsymbol{{Tx}}$}
\psframe[linewidth=0.04,dimen=outer](14.0,-1.0)(16.0,1.0)
\psline[linewidth=0.04cm,arrowsize=2.2mm]{->}(16.00,0.0)(16.65,0.0)
\end{pspicture} 
}
\vskip -4mm
\caption{In Model~\ref{m:1}, the $i$th layer involves a linear
weight operator $W_i$, a bias vector $b_i$, and an 
activation operator $R_i$, which is assumed to be a nonlinear 
averaged nonexpansive operator.}
\end{figure}
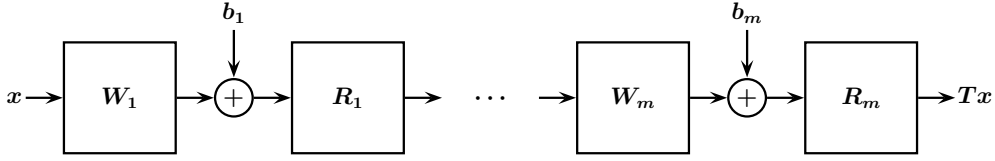

\begin{model}
\label{m:1}
Let $m\geq 1$ be an integer and let $(\HH_i)_{0\leq i\leq m}$ be 
nonzero real Hilbert spaces. For every $i\in\{1,\ldots,m\}$, 
let $W_i\colon\HH_{i-1}\to\HH_i$ be a bounded linear 
operator, let $b_i\in\HH_i$, let $\alpha_i\in [0,1]$, and let 
$R_i\colon\HH_i\to\HH_i$ be an $\alpha_i$-averaged operator. Set
\begin{equation}
\label{e:defTi}
T=T_m\circ\cdots\circ T_1,
\quad\text{where}\quad(\forall i\in\{1,\ldots,m\})\quad 
T_i\colon\HH_{i-1}\to\HH_i\colon x\mapsto R_i(W_ix+b_i).
\end{equation}
\end{model}

Since the operators $(R_i)_{1\leq i\leq m}$ are
nonexpansive, a Lipschitz constant for $T$ in 
\eqref{e:defTi} is 
\begin{equation}
\label{e:trivubound}
\theta_m=\prod_{i=1}^m\|W_i\|.
\end{equation}
However, as already mentioned, this constant is usually
quite loose and of limited use to assess the actual stability of 
the network. A novelty of our approach is to take into account the
averagedness properties of the individual activation operators 
to capture more sharply the overall interactions between the 
layers, yielding tighter constants than those provided by 
computing bounds for groups of layers. Our specific 
contributions are the following:
\begin{itemize}
\item 
We show that the most common activation operators used in neural
networks are averaged operators. This not only provides an a 
posteriori justification for Model~\ref{m:1}, but also indicates
that this highly structured framework should be of interest in the 
analysis of other properties of layered networks beyond stability. 
\item 
We derive a general expression for a Lipschitz constant of $T$ in
terms of the averagedness constants of the activation operators 
$(R_i)_{1\leq i\leq m}$ and the norms of certain compositions of 
the linear operators $(W_i)_{1\leq i\leq m}$. 
This Lipschitz constant is shown to lie between the simple upper 
bound \eqref{e:trivubound} and the lower bound 
$\|W_m\circ\cdots\circ W_1\|$ corresponding to a purely
linear network. Our analysis applies to any type of linear 
operator, in particular convolutive ones, and it does not require
any additional assumptions on the activation operator. In
particular, differentiability is not assumed and our results
therefore cover, in particular, networks using the rectified linear 
unit (ReLU) and max-pooling operations.
\item 
In the common situation when the activation operators are
separable, we obtain tighter Lipschitz constants 
for various norms.
\item 
Under some positivity condition, we prove that a 
Lipschitz constant of the network reduces to that of the 
associated purely linear network obtained by removing the nonlinear
operators.
\end{itemize} 
In \cite{2019}, we investigated the special case of
Model~\ref{m:1} in which the activation operators 
$(R_i)_{1\leq i\leq m}$ are proximity operators, hence 
$1/2$-averaged (see Section~\ref{sec:-2}). 
The objective there was to study the asymptotic behavior
of deep network structures rather than their stability.

The remainder of the paper is organized as follows. 
In Section~\ref{sec:gli} we present an illustration of our main
result in a simple special case. 
In Section~\ref{sec:-2} we provide the necessary nonlinear
analysis background. 
In Section~\ref{sec:actav} we show that a wide array of activation
operators used in neural networks are indeed nonexpansive.
In Section~\ref{sec:3} we derive general results
concerning Lipschitz constants for Model~\ref{m:1}. 
Section~\ref{sec:4} refines
this analysis in the case of separable activation operators.

\section{Preview of the main results in a simple scenario} 
\label{sec:gli}
We illustrate on a simple instance the main results of 
the paper. More precisely, we consider a three-layer ($m=3$) 
network where, for every $i\in \{0,1,2,3\}$, $\HH_i$ is
the standard Euclidean space $\RR^{N_i}$. In this case, each
linear operator $W_i$ is identified with a matrix in 
$\RR^{N_i\times N_{i-1}}$. To further simplify our setting, we
assume that the operators $R_1$, $R_2$, and $R_3$
correspond to ReLU layers, that is, for each $i\in\{1,2,3\}$,
\begin{equation}
\big(\forall x=(\xi_{k})_{1\leq k\leq N_i})\in\RR^{N_i}\big)
\quad
R_ix=\big(\rho(\xi_{k})\big)_{1\leq k\leq N_i},
\quad\text{where}\quad
\rho\colon\xi\mapsto\max\{0,\xi\}. 
\end{equation}
In view of \eqref{e:averaged}, $\rho=(1/2)\Id+(1/2)|\cdot|$
is $1/2$-averaged since $|\cdot|$ has Lipschitz constant 1. 
This implies that the operators $R_1$, $R_2$, and $R_3$ 
are also $1/2$-averaged \cite{2019}.
Let us now introduce two parameters which 
will play a central role in our analysis, namely,
\begin{equation}
\label{e:defthetamac} 
\theta_3=\frac{1}{4}\big(\|W_3W_2W_1\|+\|W_3W_2\|\,\|W_1\|
+\|W_3\|\,\|W_2W_1\|+\|W_3\|\,\|W_2\|\,\|W_1\|\big)
\end{equation}
and
\begin{equation}
\vartheta_3=
\sup_{\Lambda_1\in\mathscr{D}_{\{0,1\}}^{(N_1)},
\Lambda_2\in\mathscr{D}_{\{0,1\}}^{(N_2)}}
\|W_3\Lambda_2W_2\Lambda_1W_1\|,
\end{equation}
where $\|\cdot\|$ is the spectral norm and, for each $i\in \{1,2\}$,
$\mathscr{D}_{\{0,1\}}^{(N_i)}$ denotes the set of $N_i\times N_i$
diagonal matrices with entries in $\{0,1\}$. In this context, our
main result states that both $\theta_3$ and $\vartheta_3$ are 
Lipschitz constants of the network, and that
\begin{equation}
\label{e:allsum}
\|W_3 W_{2} W_1\|\leq\vartheta_3\leq\theta_3\leq
\|W_3\|\,\|W_2\|\,\|W_1\|.
\end{equation}
In addition, if the entries of the matrices $(W_i)_{1\leq i\leq 3}$ 
are in $\RP$, then a Lipschitz constant of the network is 
$\|W_3W_2W_1\|$.

\begin{example}
\label{ex:numeric}
\rm
To illustrate the improvement of the proposed bound over the
classical product norm estimate, 
we consider a fully connected network 
with $N_0=8$, $N_1=10$, $N_2=6$, and $N_3=3$.
The entries of the matrices 
$(W_i)_{1\leq i\leq 3}$ are generated randomly and 
independently according
to a normal distribution. We evaluate the Lipschitz constant
estimate $\theta_3$ provided by \eqref{e:defthetamac} and the lower
bound in \eqref{e:allsum}. The average (resp.\ minimal)
value of $\theta_3/(\|W_1\|\,\|W_2\|\,\|W_3\|)$ computed over 1000
realizations is approximately equal to $0.6699$ (resp. $0.5112 $),
while the average (resp.\ minimal) value  of 
$\|W_3W_2W_1\|/(\|W_1\|\,\|W_2\|\,\|W_3\|)$ is approximately equal
to $0.3747$ (resp. $0.1208$). 
In addition, the average
(resp.\ minimal) value of $\vartheta_3/(\|W_1\|\,\|W_2\|\,\|W_3\|)$ 
computed over 1000 realizations is approximately equal
to $0.4528$ (resp.\ $0.2424$). In agreement with 
\eqref{e:allsum}, this estimation of the Lipschitz
constant is better than $\theta_3$ and significantly sharper than
$\|W_1\|\,\|W_2\|\,\|W_3\|$.  
\end{example}

In the remainder of this paper, we show that the above results hold
in a much more general context (for an arbitrary number of 
layers $m$, arbitrary Hilbert spaces, and a
wide class of activation operators), and that some of them can be
extended to non-Euclidean norms. To establish these results, we
need to introduce suitable mathematical tools in the next
section.

\section{Nonexpansive averaged activation operators}
\label{sec:2}

\subsection{Nonlinear analysis tools and notation}
\label{sec:-2}

We review some key facts and definitions which will be used
subsequently; see \cite{Livre1} for further information.
Throughout, $\HH$ is a real Hilbert space with power set $2^\HH$,
scalar product $\scal{\cdot}{\cdot}$, and associated norm 
$\|\cdot\|$.

Let $R\colon\HH\to\HH$ be an operator and let $\alpha\in[0,1]$. 
Then $R$ is nonexpansive if it is $1$-Lipschitzian,
$\alpha$-averaged if there exists a nonexpansive 
operator $Q\colon\HH\to\HH$ such that
$R=(1-\alpha)\Id+\alpha Q$, and firmly nonexpansive if it is
$1/2$-averaged.
Let $A\colon\HH\to 2^{\HH}$ be a set-valued operator. We denote by
$\gra A=\menge{(x,u)\in\HH\times\HH}{u\in Ax}$ the graph of 
$A$ and by $A^{-1}$ the inverse of $A$, i.e., the operator 
with graph $\menge{(u,x)\in\HH\times\HH}{u\in Ax}$.
In addition, $A$ is monotone if
\begin{equation}
(\forall (x,u)\in\gra A)(\forall (y,v)\in\gra A)\quad
\scal{x-y}{u-v}\geq 0,
\end{equation}
and maximally monotone if there exists no monotone operator 
$B\colon\HH\to 2^{\HH}$ such that $\gra A\subset\gra B\neq\gra A$.
If $A$ is maximally monotone, then its resolvent 
$J_A=(\Id+A)^{-1}$ is firmly nonexpansive.
We denote by $\Gamma_0(\HH)$ the class of proper lower 
semicontinuous convex functions from $\HH$ to $\RX$.
Let $f\in\Gamma_0(\HH)$. The conjugate of $f$ is 
\begin{equation}
\Gamma_0(\HH)\ni f^*\colon u\mapsto
\sup_{x\in\HH}(\scal{x}{u}-f(x))
\end{equation}
and the subdifferential of $f$ is
the maximally monotone operator 
\begin{equation}
\partial f\colon\HH\to 2^{\HH}\colon x\mapsto
\menge{u\in\HH}{(\forall y\in\HH)\;\:\scal{y-x}{u}+f(x)\leq f(y)}.
\end{equation}
For every $x\in\HH$, the unique minimizer
of $f+\|x-\cdot\|^2/2$ is denoted by $\prox_f x$.
We have $\prox_f=J_{\partial f}$ and $\prox_f$ is therefore firmly
nonexpansive. 

Let $C$ be a nonempty convex subset of $\HH$. Then
$\iota_C$ is the indicator function of $C$ (it takes values 
$0$ on $C$ and $\pinf$ on its complement) and 
$d_C\colon x\mapsto\min_{y\in C}\|x-y\|$ is its distance function.
If $C$ is closed, its projection operator is 
$\proj_C=\prox_{\iota_C}$.

\subsection{Activators as averaged operators}
\label{sec:actav}
We show via various illustrations that 
the assumption made in Model~\ref{m:1} on the activation
operators covers many existing instances of feed-forward neural
networks. Let us start with some key properties.

\begin{proposition}
\label{p:23}
Let $\HH$ be a real Hilbert space, let $\alpha\in\czeroun$, and 
let $R\colon\HH\to\HH$ be $\alpha$-averaged. Then the following 
hold:
\begin{enumerate}
\itemsep0mm 
\item
\label{p:23i}
There exist a maximally monotone operator $A\colon\HH\to 2^{\HH}$
and a constant $\lambda\in\left[0,2\right]$ such that 
$R=\Id+\lambda(J_A-\Id)$. Furthermore, if
$\lambda\leq 1$, then $R$ is firmly nonexpansive.
\item
\label{p:23ii}
Suppose that $\HH=\RR$. Then there exist a function
$\phi\in\Gamma_0(\RR)$ and a constant $\lambda\in\left[0,2\right]$ 
such that $R=\Id+\lambda(\prox_\phi-\Id)$. Furthermore,
$R$ is increasing if $\lambda\leq 1$ and $R$ is odd if $\phi$ is 
even.
\item
\label{p:23iii}
Suppose that $\HH=\RR$ and that $R$ is increasing. Then there
exists $\phi\in\Gamma_0(\RR)$ such that $R=\prox_\phi$.
\end{enumerate}
\end{proposition}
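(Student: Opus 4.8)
The plan is to assemble three standard correspondences from monotone operator theory: that a nonexpansive operator $Q$ yields a firmly nonexpansive operator via $\tfrac12(\Id+Q)$, that firmly nonexpansive operators with full domain are exactly the resolvents of maximally monotone operators, and that in dimension one every maximally monotone operator is the subdifferential of a function in $\Gamma_0(\RR)$. Each of these is available in the cited reference, so the proof is mainly a matter of chaining them together in the right order while tracking the averagedness parameter.

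For \ref{p:23i}, I would begin from a representation $R=(1-\alpha)\Id+\alpha Q$ with $Q$ nonexpansive. Since $\tfrac12(\Id+Q)$ is firmly nonexpansive, it equals $J_A$ for some maximally monotone $A$, so that $Q=2J_A-\Id$. Substituting gives $R=(1-2\alpha)\Id+2\alpha J_A$, which is exactly $R=\Id+\lambda(J_A-\Id)$ with $\lambda=2\alpha\in[0,2]$. For the supplementary claim, suppose $\lambda\leq 1$, i.e.\ $\alpha\leq\tfrac12$. Then $2R-\Id=(1-2\alpha)\Id+2\alpha Q$ is a convex combination of the nonexpansive operators $\Id$ and $Q$, hence nonexpansive, and therefore $R=\tfrac12\Id+\tfrac12(2R-\Id)$ is firmly nonexpansive by definition.

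For \ref{p:23ii}, the only additional ingredient is the one-dimensional structure theorem identifying maximally monotone operators on $\RR$ with subdifferentials. Applying it to the operator $A$ delivered by \ref{p:23i} gives $A=\partial\phi$ for some $\phi\in\Gamma_0(\RR)$, whence $J_A=\prox_\phi$ and $R=\Id+\lambda(\prox_\phi-\Id)$. If $\lambda\leq 1$, then $R$ is firmly nonexpansive by \ref{p:23i}, hence monotone, and monotonicity on the line is exactly the property of being increasing; equivalently, $R=(1-\lambda)\Id+\lambda\prox_\phi$ is a convex combination of increasing maps. Finally, if $\phi$ is even, the change of variable $y\mapsto-y$ in the minimization defining $\prox_\phi$ shows $\prox_\phi$ is odd, and oddness passes to $R$ through its affine dependence on $\prox_\phi$.

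Part \ref{p:23iii} is the one requiring a genuine idea, since the representation from \ref{p:23ii} is a proximity operator only in the degenerate case $\lambda=1$. The crucial observation is that in $\RR$ an increasing nonexpansive map is automatically firmly nonexpansive: for $x>y$ the two hypotheses force $0\leq Rx-Ry\leq x-y$, so that $(Rx-Ry)(x-y)\geq(Rx-Ry)^2$, which is precisely firm nonexpansiveness. Once $R$ is seen to be firmly nonexpansive, the resolvent characterization gives $R=J_A$ with $A$ maximally monotone, and the one-dimensional identification $A=\partial\phi$ then yields $R=J_{\partial\phi}=\prox_\phi$. I expect this passage --- extracting firm nonexpansiveness from the mere combination of monotonicity and nonexpansiveness in one dimension --- to be the main obstacle; everything else is routine bookkeeping with results already recorded in Section~\ref{sec:-2}.
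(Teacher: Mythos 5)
Your argument is correct, and for parts \ref{p:23i} and \ref{p:23ii} it follows the same chain of correspondences as the paper: write $R=(1-\alpha)\Id+\alpha Q$, identify $Q=2J_A-\Id$ via Minty's theorem so that $\lambda=2\alpha$, and in dimension one replace $A$ by $\partial\phi$ so that $J_A=\prox_\phi$. The only substantive divergences are local. For the firm nonexpansiveness claim in \ref{p:23i} you verify that $2R-\Id$ is a convex combination of $\Id$ and $Q$, hence nonexpansive, and invoke the definition of $1/2$-averagedness; the paper instead writes $R=(1-\lambda)\Id+\lambda J_A$ and cites the fact that a convex combination of firmly nonexpansive operators is firmly nonexpansive. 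These are equivalent and equally rigorous. For \ref{p:23iii} the paper simply cites Lemma~\ref{l:prox24} (a result quoted from the literature stating that the proximity operators on $\RR$ are exactly the increasing nonexpansive maps), whereas you prove the relevant direction from scratch: the observation that for $x>y$ one has $0\leq Rx-Ry\leq x-y$, hence $(Rx-Ry)(x-y)\geq(Rx-Ry)^2$, upgrades increasing plus nonexpansive to firmly nonexpansive, after which the resolvent and subdifferential identifications finish the job. Your route is more self-contained and makes visible why the one-dimensional setting is special; the paper's is shorter because it outsources precisely this step. The same remark applies to the increasingness claim in \ref{p:23ii}, where your direct appeal to monotonicity of firmly nonexpansive operators replaces the paper's appeal to the cited lemma.
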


Next, we illustrate the pervasiveness of nonexpansive averaged 
activation operators in practice, starting with activation
operators on the real line. 

\begin{figure}[!th]
\begin{center}
\vskip -22mm
\scalebox{1.3} % Change this value to rescale the drawing.
{
\begin{pspicture}(-3.5,-2.2)(3.5,3.8) % extreme points 
\def\muu{8.0/(3.0*sqrt(3.0))}
\psplot[plotpoints=400,linewidth=0.025cm,linestyle=solid,%
algebraic,linecolor=blue]{-1.5395}{1.5395}%
{\muu*ATAN(sqrt(abs(x)/(\muu-abs(x))))-%
sqrt(abs(x)*(\muu-abs(x)))-x^2/2}
\psline[linewidth=0.03cm,linecolor=blue,linestyle=solid]{)-}%
(1.5396,2.0)(3.0,2.0)
\psline[linewidth=0.03cm,linecolor=blue,linestyle=solid]{-(}%
(-3.0,2.0)(-1.5396,2.0)
\psline[linewidth=0.02cm,arrowsize=0.05cm 4.0,%
arrowlength=1.4,arrowinset=0.4]{->}(-3,0)(3.0,0)
\psline[linewidth=0.02cm,arrowsize=0.05cm 4.0,%
arrowlength=1.4,arrowinset=0.4]{->}(0,-1)(0,2.0)
\rput(-1.5396,-0.0){\tiny$|$}
\rput(-1.5396,-0.3){\tiny$-\mu$}
\rput(1.5396,-0.0){\tiny$|$}
\rput(1.5396,-0.3){\tiny$\mu$}
\rput(-0.7,1.2332){\tiny$\dfrac{\mu}{2}(\pi-\mu)$}
\rput(-0.0,1.2332){\tiny$-$}
\rput(1.5396,1.2332){\blue\tiny$\bullet$}
\rput(-1.5396,1.2332){\blue\tiny$\bullet$}
\rput(0.0,2.2){\tiny$\phi(x)$}
\rput(3.1,0){\tiny$x$}
\rput(3.3,2){\tiny$\pinf$}
\rput(-3.3,2){\tiny$\pinf$}
\end{pspicture} 
}
\vskip -12mm
\scalebox{0.7} % Change this value to rescale the drawing.
{
\begin{pspicture}(-6.7,-3.4)(6.6,4.8) % extreme points 
\psplot[plotpoints=400,linewidth=0.05cm,linestyle=solid,%
algebraic,linecolor=blue]{-6.0}{6.0}%
{8.0/(3.0*sqrt(3.0))*x*abs(x)/(1.0+x^(2.0))}
\psplot[plotpoints=400,linewidth=0.05cm,linestyle=solid,%
algebraic,linecolor=red]{-6.0}{6.0}%
{(1-1.5)*x+1.5*8.0/(3.0*sqrt(3.0))*x*abs(x)/(1.0+x^(2.0))}
\psplot[plotpoints=400,linewidth=0.05cm,linestyle=solid,%
algebraic,linecolor=dgreen]{-6.0}{6.0}%
{(1-0.5)*x+0.5*8.0/(3.0*sqrt(3.0))*x*abs(x)/(1.0+x^(2.0))}
\psline[linewidth=0.04cm,arrowsize=0.09cm 4.0,%
arrowlength=1.4,arrowinset=0.4]{->}(-6,0)(6.0,0)
\psline[linewidth=0.04cm,arrowsize=0.09cm 4.0,%
arrowlength=1.4,arrowinset=0.4]{->}(0,-4)(0,4.0)
\rput(-2.00,-0.0){$|$}
\rput(-4.00,-0.0){$|$}
\rput(-2.00,-0.4){$-2$}
\rput(-4.00,-0.4){$-4$}
\rput(2.00,-0.0){$|$}
\rput(4.00,-0.0){$|$}
\rput(2.00,-0.4){$2$}
\rput(-0.4,3.00){$3$}
\rput(-0.0,3.00){$-$}
\rput(-0.4,-3.00){$-3$}
\rput(-0.0,-3.00){$-$}
\rput(-0.4,-1.00){$-1$}
\rput(-0.0,-1.00){$-$}
\rput(-0.4,1.00){$1$}
\rput(-0.0,1.00){$-$}
\rput(4.00,-0.4){$4$}
\rput(0.0,4.4){$R(x)$}
\rput(6.3,0){$x$}
\end{pspicture} 
}
\end{center}
\vskip 0mm
\caption{Averaged activation functions: Illustration of 
Example~\ref{ex:23}\ref{ex:23ii}.
Top: The function $\phi$ of \eqref{e:eh8}.
Bottom: In blue, the activation operator $R$ of \eqref{e:14R} 
is the proximity operator of $\phi$, which corresponds to 
$\lambda=1$ in \eqref{e:rprox}. The green curve corresponds to 
the case when $\lambda=0.5$ in \eqref{e:rprox}, and the red 
one to the case when $\lambda=1.5$. As stated in 
Proposition~\ref{p:23}\ref{p:23i}, relaxation parameters 
$\lambda\in[0,1]$ yield increasing activation functions.
Non-monotonic averaged activation functions in \eqref{e:rprox} 
must be generated with relaxation parameters 
$\lambda\in\left]1,2\right]$. 
As seen in Proposition~\ref{p:23}\ref{p:23ii},
since $\phi$ is even, $R$ is odd.}
\label{fig:2}
\end{figure}
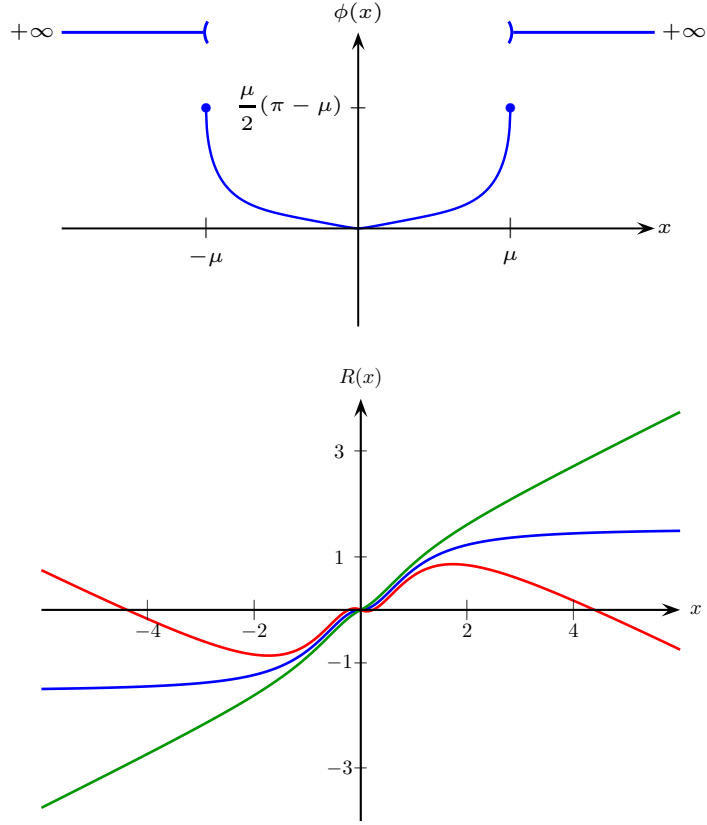

\begin{example}
\label{ex:23}
\rm
Proposition~\ref{p:23}\ref{p:23ii} states that activation functions 
on the real line can be expressed in the generic form
\begin{equation}
\label{e:rprox}
R=\Id+\lambda(\prox_\phi-\Id), 
\quad\text{where}\quad
\phi\in\Gamma_0(\RR) 
\quad\text{and}\quad
\lambda\in[0,2].
\end{equation}
Here are a few explicit instantiations of this proximal
representation.
\begin{enumerate}
\itemsep0mm 
\item 
\label{ex:23i}
If $\lambda=1$, we obtain the class of proximal
activation functions discussed in \cite{2019} and which was seen
there to include standard instances such as 
the unimodal sigmoid activation function \cite[Example~2.13]{2019},
the saturated linear activation function \cite[Example~2.5]{2019},
the ReLU activation function 
\cite[Example~2.6]{2019}, the inverse square root unit activation 
function \cite[Example~2.9]{2019}, the hyperbolic tangent 
activation function \cite[Example~2.12]{2019},
and the Elliot activation function \cite[Example~2.15]{2019}. 
Additional examples in this category are the following. 
Given $\beta\in\RPP$, the capped ReLU activation function 
\cite{Kriz10} is 
\begin{equation}
(\forall x\in\RR)\quad R(x)=\prox_{\iota_{[0,\beta]}}(x)
=\min\{\max\{x,0\},\beta\}, 
\end{equation}
and, for $\beta\leq 1$, the exponential linear unit (ELU) function 
\cite{Clev15} is 
\begin{equation}
\label{e:elu}
(\forall x\in\RR)\quad R(x)=
\begin{cases}
x,&\text{if}\;\;x\geq 0;\\
\beta\big(\exp(x)-1\big), &\text{if}\;\;x<0.
\end{cases}
\end{equation}
It follows from \cite[Cor.~24.5, Prop.~24.32, and
Exa.~13.2(v)]{Livre1} that $R=\prox_{\phi}$, where
\begin{equation}
\label{e:elu2}
(\forall x\in\RR)\quad 
\phi(x)=
\begin{cases}
0&\text{if}\;\;x\geq 0;\\
(x+\beta)\ln\bigg(\dfrac{x+\beta}{\beta}\bigg)
-x-\dfrac{x^2}{2},
&\text{if}\;\;-\beta<x<0;\\
\beta-\dfrac{\beta^2}{2},&\text{if}\;\;x=-\beta;\\
\pinf,&\text{if}\;\;x<-\beta.
\end{cases}
\end{equation}
The softplus activation function \cite{Glor11} 
$R\colon x\mapsto\ln((1+e^x)/2)$ is also a proximity operator since
it is nonexpansive and increasing (see 
Proposition~\ref{p:23}\ref{p:23iii}).
\item
\label{ex:23ii}
The Geman--McClure function
\cite{Gema85} 
\begin{equation}
\label{e:14R}
(\forall x\in\RR)\quad 
R(x)=\frac{\mu\operatorname{sign}(x)x^2}{1+x^2},
\quad\text{where}\quad\mu=\dfrac{8}{3\sqrt{3}},
\end{equation}
will be employed in Example~\ref{ex:404usa}.
Set $\psi=|\cdot|-\arctan|\cdot|\in\Gamma_0(\RR)$.
Then $R$ is nonexpansive and $R=\mu\psi'$.
The conjugate of $\mu\psi$ is 1-strongly convex and given by
$\mu\psi^*(\cdot/\mu)$, where 
\begin{equation}
\label{e:psibarcaps}
(\forall x\in\RR)\quad\psi^*(x)=   
\begin{cases}
\arctan\sqrt{\dfrac{|x|}{1-|x|}}-\sqrt{|x|(1-|x|)},
&\text{if}\;\;|x|<1;\\
\dfrac{\pi}{2},&\text{if}\;\;|x|=1;\\
\pinf,&\text{otherwise.}
\end{cases}
\end{equation}
It follows from \cite[Cor.~24.5]{Livre1} that $R=\prox_{\phi}$
with (see Fig.~\ref{fig:2})
\begin{equation}
\label{e:eh8}
\phi=\mu\psi^*\bigg(\dfrac{\cdot}{\mu}\bigg)-
\dfrac{|\cdot|^2}{2}\colon x\mapsto 
\begin{cases}
\mu\arctan\sqrt{\dfrac{|x|}{\mu-|x|}}-\sqrt{|x|(\mu-|x|)}
-\dfrac{x^2}{2},&\text{if}\;\;|x|<\mu;\\
\dfrac{\mu(\pi-\mu)}{2},&\text{if}\;\;|x|=\mu;\\
\pinf,&\text{otherwise.}
\end{cases}
\end{equation}
\item
Take $\phi=\iota_{\RP}$. Then we obtain 
the leaky ReLU activation function \cite{Maas13} for $0<\lambda<1$,
the ReLU activation function for $\lambda=1$, and the absolute 
value activation function \cite{Brun13} for $\lambda=2$.
\item
\label{ex:23iv}
The use of nonmonotonic activation functions has been advocated 
in various papers. They turn out to be $\alpha$-averaged
(alternatively, in view of Proposition~\ref{p:23}\ref{p:23ii}, 
they are of the form \eqref{e:rprox} with 
$\lambda\in\left]1,2\right]$). 
To compute the averagedness constant of a nonexpansive operator
$R\colon\RR\to\RR$, one can proceed as follows.
According to \eqref{e:averaged}, we must find the smallest 
$\alpha\in\rzeroun$ such that $Q=\Id+\alpha^{-1}(R-\Id)$ remains
nonexpansive. This means that 
the supremum of the modulus of the one-sided derivatives 
(the derivatives if they exist) over $\RR$ should be one.
Thus, we obtain $\alpha=1$ for the sine activation function 
$R=\sin$ \cite{Naga95}, as well as for
the absolute value function $R=|\cdot|$
\cite{Brun13} and the mirrored ReLU activation function
\cite{Zhao16}
\begin{equation}
(\forall x\in\RR)\quad R(x)=\proj_{[0,1]}|x|=
\begin{cases}
|x|,&\text{if}\;\;|x|<1;\\
1,& \text{otherwise,}
\end{cases}
\end{equation}
$\alpha\approx 0.546$ for the swish activation 
function \cite{Rama17}
\begin{equation}
(\forall x\in\RR)\quad R(x)=\dfrac{10x}{11(1+\exp(-x))},
\end{equation}
$\alpha\approx 0.536$ for the exponential linear squashing (ELiSH) 
function \cite{Basi18}
\begin{equation}
(\forall x\in\RR)\quad R(x)=\frac{10}{11}\times 
\begin{cases}
\dfrac{x}{1+\exp(-x)},&\text{if}\;\;x\geq 0;\\[4mm]
\dfrac{\exp(x)-1}{1+\exp(-x)},&\text{if}\;\;x<0,
\end{cases}
\end{equation}
and $\alpha=(1+\sqrt{2/e})/2$ for the Gaussian activation function 
$R\colon x\mapsto\exp(-x^2)$ \cite{Mhas94}.
\end{enumerate}
\end{example}

Next, is a technique for lifting a proximal activation 
operator from $\RR$ to a Hilbert space $\HH$.

\begin{example}
\label{ex:404usa}
\rm
Let $\HH$ be a real Hilbert space, let $\lambda\in[0,2]$, let 
$C$ be a nonempty closed convex subset of $\HH$, let 
$\phi\in\Gamma_0(\RR)$ be an even function such that 
$\phi^*$ is differentiable on $\RR\smallsetminus\{0\}$ with $0$ as
its unique minimizer. Set
\begin{equation}
\label{e:304usa}
(\forall x\in\HH)\quad
Rx=
\begin{cases}
(1-\lambda)x+
\Frac{\lambda\prox_{\phi}d_C(x)}{d_C(x)}
(x-\proj_Cx),&\text{if}\;\;x\notin C;\\
(1-\lambda)x,&\text{if}\;\;x\in C.
\end{cases}
\end{equation}
Then $R$ is $\lambda/2$-averaged.
In particular, set
$\lambda=1$, $C=\{0\}$, $\mu=8/(3\sqrt{3})$ and 
define $\phi$ as in \eqref{e:eh8}. Then 
we infer that the squashing function 
\begin{equation}
\label{e:404usa}
R\colon x\mapsto\frac{\mu\|x\|}{1+\|x\|^2}x
\end{equation}
used in capsule networks \cite{Sabo17}
is a proximal activation operator.
\end{example}

Another construction that builds on activation functions on the 
real line is the following, which is reminiscent of the original
multilayer perceptrons \cite{Rose58}.

\begin{example}
\label{ex:93}
\rm
Let $\HH$ be a separable real Hilbert space, let
$\emp\neq\bK\subset\NN$, let $(e_k)_{k\in\bK}$ be an
orthonormal basis of $\HH$, and let $\alpha\in [0,1]$.
For every $k\in\bK$, let $\varrho_k\colon\RR\to\RR$
be $\alpha$-averaged and such that $\varrho_k (0)=0$. Define
$R\colon\HH\to\HH\colon
x\mapsto\sum_{k\in\bK}\varrho_k(\scal{x}{e_k})e_k$.
Then $R$ is $\alpha$-averaged. 
\end{example}

\begin{example}
\label{ex:rt5}
\rm
Let $N$ be a strictly positive integer, let $\omega\in\czeroun$,
and let $C$ be a nonempty closed convex subset of $\RR^N$. Set
\begin{equation}
R\colon\RR^N\to\RR^N\colon 
(\xi_{k})_{1\leq k\leq N}\mapsto
\omega\big(\xi_{k}^\uparrow\big)_{1\leq k\leq N}+(1-\omega) 
\proj_C(\xi_{k})_{1\leq k\leq N},
\end{equation}
where $(\xi_{k}^\uparrow)_{1\leq k\leq N}$ denotes the vector 
obtained by sorting the components of $(\xi_{k})_{1\leq k\leq N}$
in ascending order. Then $R$ is $(1+\omega)/2$-averaged.
\end{example}

\begin{remark}
\rm
\label{r:l5}
Set $C=\menge{(\xi_k)_{1\leq k\leq N}\in\RR^N}
{\xi_1=\cdots=\xi_N}$ in Example~\ref{ex:rt5}.
Then
\begin{equation}
R\colon\RR^N\to \RR^N \colon 
(\xi_{k})_{1\leq k\leq N}\mapsto
\Bigg(\omega\xi_{k}^\uparrow+\frac{1-\omega}{N}
\sum_{j=1}^N\xi_{j}\Bigg)_{1\leq k\leq N}.
\end{equation}
Now set $W\colon\RR^N\to\RR\colon(\xi_{k})_{1\leq k\leq N}
\mapsto\xi_N$.
Then $W\circ R$ corresponds to the max-average pooling
performed on a block of size $N$ \cite{Leec16}. 
When $\omega=0$, the standard average-pooling operation is obtained,
which is associated with the activation operator $\proj_C$.
When $\omega=1$, we recover the standard max-pooling operation
\cite{Bour10}, which is the main building block of maxout layers
\cite{Good13}. The max-pooling operator is nonexpansive.
\end{remark}

\begin{example}
\label{ex:rt6}
\rm
Let $2\leq N\in\NN$, let 
$\{\tau_k\}_{1\leq k\leq N-1}\subset\left]-1,1\right[$, 
and let $\theta\in\RR$. Set
\begin{equation}
R\colon\RR^{N-1}\to\RR^{N-1}\colon 
(\xi_{k})_{1\leq k\leq N-1}\mapsto US\Big(
[\tau_1\xi_1,\ldots,\tau_{N-1}\xi_{N-1},\theta]^\top\Big),
\end{equation}
where $U\in\RR^{(N-1)\times N}$ is the matrix obtained 
by retaining the first $(N-1)$
rows of the identity matrix of size $N\times N$, and 
$S\colon\RR^N\to\RR^N\colon(\xi_{k})_{1\leq k\leq N}
\mapsto (\xi_{k}^\uparrow)_{1\leq k\leq N}$. Then $R$ is 
$(1+\max\{|\tau_1|,\ldots,|\tau_{N-1}|\})/2$-averaged.
\end{example}

\begin{remark} 
\rm
Let $N\geq 3$ be an odd integer, 
let $(\tau_k)_{1\leq k\leq N-1}\in\left]-1,1\right[^{N-1}$,  
let $\theta\in\RR$, let $R$ be the activation operator defined in
Example~\ref{ex:rt6}, and set
$W\colon \RR^{N-1}\to\RR\colon (\xi_{k})_{1\leq k\leq N-1}\mapsto
\xi_{\frac{N+1}{2}}$.
Then, for every $x=(\xi_{k})_{1\leq k\leq N-1}\in \RR^{N-1}$, 
$(W\circ R)x=\text{\rm median}\{\tau_1\xi_1,\ldots,\tau_{N-1}
\xi_{N-1},\theta\}$.
This corresponds to the median neuron model introduced in
\cite{Alad14}.
\end{remark}

\begin{remark}
\rm
Multi-component averaged activation operators can be
derived from the\linebreak above examples. Indeed, let
$(\HH_i)_{1\leq i\leq M}$ be real Hilbert spaces and let
$\HH=\HH_1\oplus\cdots\oplus\HH_M$ be their Hilbert direct sum. 
For every $i\in\{1,\ldots,M\}$, let $\alpha_i\in[0,1]$ and let
$R_i\colon\HH_i\to\HH_i$ be $\alpha_i$-averaged. Then
$R\colon\HH\to\HH\colon (x_i)_{1\leq i\leq M}\mapsto
(R_ix_i)_{1\leq i\leq M}$ is $\alpha$-averaged with
$\alpha=\max_{1\leq i\leq M}\alpha_i$.
\end{remark}

\section{Lipschitz constants for layered networks}
\label{sec:3}

The objective of this section is to derive Lipschitz constants 
for networks conforming to Model~\ref{m:1}.
Note that, if $m=1$, a Lipschitz constant of $T$ is
clearly $\theta_1=\|W_1\|$ since $R_1$ is nonexpansive. 
We shall therefore focus henceforth on the case $m\geq 2$. 
Throughout, the following notation is employed.

\begin{notation}
Let $2\leq m\in\NN$ and $k\in\{1,\ldots,m-1\}$. Then
\begin{equation}
\JJ_{m,k}=
\begin{cases}
\menge{(j_1,\ldots,j_k)\in\NN^k}{1\leq j_1<\cdots<j_k\leq m-1},
&\text{if}\;\;k>1;\\
\{1,\ldots,m-1\}, &\text{if}\;\; k=1
\end{cases}
\end{equation}
and, for every $(j_1,\ldots,j_k)\in\JJ_{m,k}$,
\begin{equation}
\label{e:wcal}
\sigma_{m;\{j_1,\ldots,j_k\}}=\|W_m\circ\cdots\circ W_{j_k+1}\|\,
\|W_{j_k}\circ\cdots\circ W_{j_{k-1}+1}\|\cdots 
\|W_{j_1}\circ\cdots\circ W_1\|.
\end{equation}
\end{notation}

\begin{theorem}
\label{t:propLipgen}
Consider the setting of Model~\ref{m:1} with $m\geq 2$. Set 
\begin{equation}
\label{e:beta}
(\forall\,\JJ\subset \{1,\ldots,m-1\})
\quad\beta_{m;\JJ} =\Bigg(\prod_{j\in\JJ}\alpha_j\Bigg)
\prod_{j\in\{1,\ldots,m-1\}\smallsetminus\JJ}(1-\alpha_j)
\end{equation}
and 
\begin{equation}
\label{e:defthetam}
\theta_m=\beta_{m;\emp}\|W_m\circ\cdots\circ W_1\|
+\sum_{k=1}^{m-1}
\sum_{(j_1,\ldots,j_k)\in\JJ_{m,k}} \beta_{m;\{j_1,\ldots,j_k\}}
\sigma_{m;\{j_1,\ldots,j_k\}}.
\end{equation}
Then $\theta_m$ is a Lipschitz constant of $T$.
\end{theorem}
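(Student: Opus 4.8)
The plan is to propagate a perturbation layer by layer, using the averaged decomposition \eqref{e:averaged} of each activation operator to split, at every layer, a ``linear'' contribution from a ``nonlinear'' one. Fix two inputs $x,y\in\HH_0$, set $u_0=x$, $v_0=y$, and for $i\in\{1,\ldots,m\}$ let $u_i=T_i(u_{i-1})$, $v_i=T_i(v_{i-1})$, and $d_i=u_i-v_i$. Writing $R_i=(1-\alpha_i)\Id+\alpha_i Q_i$ with $Q_i$ nonexpansive, the biases cancel and one gets, for $i\in\{1,\ldots,m-1\}$,
\[
d_i=(1-\alpha_i)W_id_{i-1}+\alpha_i\delta_i,\qquad\text{where}\qquad\|\delta_i\|\le\|W_id_{i-1}\|,
\]
with $\delta_i=Q_i(W_iu_{i-1}+b_i)-Q_i(W_iv_{i-1}+b_i)$. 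Since $R_m$ is merely nonexpansive, $\|d_m\|\le\|W_md_{m-1}\|$, which is precisely why $\alpha_m$ does not enter $\theta_m$. It therefore suffices to bound $\|W_md_{m-1}\|$ by $\theta_m\|d_0\|$.

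The crucial point is to keep consecutive weight operators composed, taking a single operator norm of each \emph{block}, and to split a norm only at a layer where the nonlinear branch $\alpha_i\delta_i$ is chosen. To capture this in a form that closes under induction, I would prepend an arbitrary bounded linear operator $V\colon\HH_\ell\to\KK$ and prove, by induction on $\ell\in\{0,\ldots,m-1\}$, that
\[
\|Vd_\ell\|\le\Bigg(\sum_{\JJ\subset\{1,\ldots,\ell\}}\Big(\prod_{j\in\JJ}\alpha_j\prod_{j\in\{1,\ldots,\ell\}\smallsetminus\JJ}(1-\alpha_j)\Big)\,\sigma^V_{\ell;\JJ}\Bigg)\|d_0\|,
\]
where $\sigma^V_{\ell;\JJ}$ is formed exactly as $\sigma_{m;\JJ}$ in \eqref{e:wcal}, but with the topmost block replaced by $V\circ W_\ell\circ\cdots\circ W_{j_k+1}$. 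The theorem is then the instance $\ell=m-1$, $V=W_m$, combined with $\|d_m\|\le\|W_md_{m-1}\|$ and the definition \eqref{e:defthetam}, since for that choice the top block reduces to $\|W_m\circ\cdots\circ W_{j_k+1}\|$ and the coefficients coincide with $\beta_{m;\JJ}$ of \eqref{e:beta}.

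For the induction, the base case $\ell=0$ is simply $\|Vd_0\|\le\|V\|\,\|d_0\|$ (empty products equal $1$). For the step, I would apply the triangle inequality to $Vd_\ell=(1-\alpha_\ell)VW_\ell d_{\ell-1}+\alpha_\ell V\delta_\ell$ and use $\|\delta_\ell\|\le\|W_\ell d_{\ell-1}\|$ to obtain
\[
\|Vd_\ell\|\le(1-\alpha_\ell)\,\|VW_\ell\,d_{\ell-1}\|+\alpha_\ell\,\|V\|\,\|W_\ell d_{\ell-1}\|.
\]
Applying the induction hypothesis at level $\ell-1$ to the operator $VW_\ell$ in the first term and to $W_\ell$ in the second yields two sums over subsets of $\{1,\ldots,\ell-1\}$, which recombine into the asserted sum over subsets of $\{1,\ldots,\ell\}$ via $\JJ\mapsto\JJ$ when $\ell\notin\JJ$ and $\JJ\mapsto\JJ\cup\{\ell\}$ when $\ell\in\JJ$. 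The factor $1-\alpha_\ell$ matches the subsets whose top block still contains $W_\ell$, so that $VW_\ell$ stays merged into one block; the factor $\alpha_\ell$ matches the subsets in which $\ell$ is the largest cut index, so that $V$ becomes an isolated factor $\|V\|$ and $W_\ell$ heads the next block (the convention that an empty top chain $W_\ell\circ\cdots\circ W_{\ell+1}$ contributes nothing).

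The main obstacle I anticipate is not analytic but the choice of inductive invariant: a naive induction on $m$ without carrying the prepended operator $V$ would break the block $VW_\ell$ apart at every step and recover only the loose product bound \eqref{e:trivubound}. Carrying $V$ along is exactly what preserves the composed chains $\|W_{j_\ell}\circ\cdots\circ W_{j_{\ell-1}+1}\|$ appearing in \eqref{e:wcal} and produces the sharper constant. The remaining care is bookkeeping: verifying that the two edge conventions (empty products equal $1$, and the empty top chain when $j_k=\ell$) make the recombined subset sum agree with \eqref{e:defthetam} and \eqref{e:beta} verbatim.
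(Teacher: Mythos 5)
Your proposal is correct and follows essentially the same route as the paper's proof: both split the topmost activation via its averaged decomposition $(1-\alpha)\Id+\alpha Q$, keep the two adjacent weight operators composed in the $(1-\alpha)$ branch while factoring out the top norm in the $\alpha$ branch, and close the induction by the same subset-recombination of the coefficients $\beta_{m;\JJ}$. Your explicit carrying of the prepended operator $V$ is simply a cleaner formalization of the step the paper performs implicitly when it applies the induction hypothesis ``replacing $W_{m-1}$ by $W_m\circ W_{m-1}$.''
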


The following proposition features some important special cases.
\begin{proposition}
\label{p:1}
Consider the setting of Model~\ref{m:1} with $m\geq 2$, and let
$\theta_m$ be defined as in \eqref{e:defthetam}. Then the following
hold:
\begin{enumerate}
\itemsep0mm 
\item 
\label{p:1ii} 
$\|W_m\circ\cdots\circ W_1\|\leq
\theta_m\leq\prod_{i=1}^m\|W_i\|$.
\item 
\label{p:1iii} 
Suppose that, for every $i\in\{1,\ldots,m-1\}$, $R_i=\Id$. Then 
$\theta_m=\|W_m\circ\cdots\circ W_1\|$.
\item 
\label{p:1iv} 
Suppose that, for every $i\in\{1,\ldots,m-1\}$, $R_i$ is purely
nonexpansive in the sense that $\alpha_i=1$ is its smallest 
averaging constant. Then $\theta_m=\prod_{i=1}^m\|W_i\|$.
\item 
\label{p:1v} 
Suppose that, for every $i\in\{1,\ldots,m-1\}$, $R_i$ is firmly
nonexpansive. Then
\begin{equation}
\label{e:defthetama}
\theta_m=\frac1{2^{m-1}} 
\Bigg(\|W_m\circ\cdots\circ W_1\|
+\sum_{k=1}^{m-1}\sum_{(j_1,\ldots,j_k)\in\JJ_{m,k}} 
\sigma_{m;\{j_1,\ldots,j_k\}}\Bigg).
\end{equation}
\item 
\label{p:1vi} 
Set $\alpha_0=\theta_0=1$. Then 
\begin{equation}
\label{e:thetamrec}
\theta_m=\sum_{i=0}^{m-1}\alpha_i\theta_i 
\Bigg(\prod_{q=i+1}^{m-1}(1-\alpha_{q})\Bigg)\|W_m\circ\cdots\circ
W_{i+1}\|. 
\end{equation}
\end{enumerate}
\end{proposition}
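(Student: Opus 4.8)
The plan is to exploit the compact form $\theta_m=\sum_{\JJ\subseteq\{1,\ldots,m-1\}}\beta_{m;\JJ}\,\sigma_{m;\JJ}$, where I adopt the convention $\sigma_{m;\emp}=\|W_m\circ\cdots\circ W_1\|$ so that the leading term of \eqref{e:defthetam} is absorbed into the sum, and where $\beta_{m;\JJ}$ is read off \eqref{e:beta} with $\JJ$ ranging over all subsets. The single identity underpinning the global bounds is the partition of unity
\[
\sum_{\JJ\subseteq\{1,\ldots,m-1\}}\beta_{m;\JJ}
=\prod_{j=1}^{m-1}\big(\alpha_j+(1-\alpha_j)\big)=1,
\]
obtained by expanding the product, while $\beta_{m;\JJ}\geq 0$ since every $\alpha_j\in\czeroun$.

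For \ref{p:1ii} I would estimate each $\sigma_{m;\JJ}$ on both sides through submultiplicativity of the operator norm. Regarding $W_m\circ\cdots\circ W_1$ as the composition of the consecutive blocks delimited by $\JJ$ gives $\|W_m\circ\cdots\circ W_1\|\leq\sigma_{m;\JJ}$; bounding in turn each block norm by the product of the $\|W_i\|$ it contains gives $\sigma_{m;\JJ}\leq\prod_{i=1}^m\|W_i\|$. Multiplying by $\beta_{m;\JJ}\geq 0$, summing over $\JJ$, and invoking the partition of unity collapses both estimates to the claimed two-sided bound.

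Parts \ref{p:1iii}, \ref{p:1iv}, and \ref{p:1v} are direct substitutions into $\theta_m$. When every $R_i=\Id$, i.e.\ $\alpha_i=0$, every $\beta_{m;\JJ}$ with $\JJ\neq\emp$ vanishes while $\beta_{m;\emp}=1$, leaving $\|W_m\circ\cdots\circ W_1\|$. When every $\alpha_i=1$, only $\JJ=\{1,\ldots,m-1\}$ survives and its $\sigma$ equals $\prod_{i=1}^m\|W_i\|$. When every $R_i$ is firmly nonexpansive, $\alpha_i=1/2$ makes $\beta_{m;\JJ}=1/2^{m-1}$ for all $\JJ$, which is exactly \eqref{e:defthetama}.

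The substantive part is the recursion \ref{p:1vi}, where I expect the main difficulty to be the combinatorial bookkeeping. Writing $\theta_i$ for the quantity \eqref{e:defthetam} associated with the first $i$ layers, I would partition the subsets $\JJ\subseteq\{1,\ldots,m-1\}$ by their largest element, using the convention $\max\emp=0$ together with $\alpha_0=\theta_0=1$. Fixing $i$ and writing any $\JJ$ with $\max\JJ=i$ as $\JJ=\JJ'\cup\{i\}$ with $\JJ'\subseteq\{1,\ldots,i-1\}$, the block structure factors $\sigma$ as $\sigma_{m;\JJ}=\|W_m\circ\cdots\circ W_{i+1}\|\,\sigma_{i;\JJ'}$, and peeling off the factor at index $i$ together with the factors $(1-\alpha_q)$ for $q>i$ factors $\beta$ as $\beta_{m;\JJ}=\alpha_i\big(\prod_{q=i+1}^{m-1}(1-\alpha_q)\big)\beta_{i;\JJ'}$. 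Summing over $\JJ'\subseteq\{1,\ldots,i-1\}$ then reconstitutes $\theta_i=\sum_{\JJ'}\beta_{i;\JJ'}\sigma_{i;\JJ'}$, and summing over $i\in\{0,\ldots,m-1\}$ yields \eqref{e:thetamrec}. The point needing care is the consistency of the degenerate cases: the index $i=0$ must reproduce the $\JJ=\emp$ term (which is where $\alpha_0=\theta_0=1$ is used), and the empty products must be interpreted so that $\theta_1=\|W_1\|$ at $i=1$ and $\prod_{q=m}^{m-1}(1-\alpha_q)=1$ at $i=m-1$.
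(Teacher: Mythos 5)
Your proposal is correct and follows essentially the same route as the paper: the two-sided bound in (i) comes from submultiplicativity applied blockwise together with the fact that the $\beta_{m;\JJ}$ form a partition of unity (the paper phrases this via independent Bernoulli variables, you via expanding $\prod_j(\alpha_j+(1-\alpha_j))$, which is the same identity), parts (ii)--(iv) are the same direct substitutions, and for (v) the paper likewise partitions the subsets $\JJ$ according to their largest element $j_k=i$ and uses exactly your factorizations of $\sigma_{m;\JJ}$ and $\beta_{m;\JJ}$ to reconstitute $\theta_i$. No gaps.
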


\begin{remark}\
\rm
Proposition~\ref{p:1}\ref{p:1ii}--\ref{p:1}\ref{p:1iv} show that
the tightest bound in terms of stability corresponds to a
linear network, while the loosest corresponds to a network with
nonlinearities having no stronger property than nonexpansiveness.
\end{remark}

We close this section by observing that the Lipschitz constant
exhibited in Theorem~\ref{t:propLipgen} is a componentwise
increasing function of the averagedness constants of 
the activation operators.

\begin{proposition}
\label{p:alphagencond}
Consider the setting of Model~\ref{m:1} with $m\geq 2$. Make the 
Lipschitz constant $\theta_m$ in Theorem~\ref{t:propLipgen} a 
function of $(\alpha_1,\ldots,\alpha_{m-1})\in [0,1]^{m-1}$.
Let $(\alpha_i)_{1\leq i\leq m-1}\in [0,1]^{m-1}$
and
$(\alpha'_i)_{1\leq i\leq m-1}\in [0,1]^{m-1}$
be such that
$(\forall i\in\{1,\ldots,m-1\})$ $\alpha_i \leq\alpha'_i$.
Then $\theta_m(\alpha_1,\ldots,\alpha_{m-1})\leq
\theta_m(\alpha'_1,\ldots,\alpha'_{m-1})$.
\end{proposition}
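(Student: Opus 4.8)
The plan is to exhibit $\theta_m$ as an affine function of each averagedness constant separately, with nonnegative slope, and then to pass from $(\alpha_i)_{1\le i\le m-1}$ to $(\alpha'_i)_{1\le i\le m-1}$ one coordinate at a time. First I would adopt the convention $\sigma_{m;\emp}=\|W_m\circ\cdots\circ W_1\|$, so that \eqref{e:defthetam} becomes the single sum over all subsets
\begin{equation}
\theta_m=\sum_{\JJ\subset\{1,\ldots,m-1\}}\beta_{m;\JJ}\,\sigma_{m;\JJ}.
\end{equation}
The crucial structural feature to record is that each coefficient $\sigma_{m;\JJ}$ lies in $\RP$ and does \emph{not} depend on $(\alpha_1,\ldots,\alpha_{m-1})$; the entire dependence on the averagedness constants is carried by the weights $\beta_{m;\JJ}$ of \eqref{e:beta}.

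Next I would fix $i\in\{1,\ldots,m-1\}$ and freeze every $\alpha_j$ with $j\neq i$. Setting $L=\{1,\ldots,m-1\}\smallsetminus\{i\}$, I would pair off the subsets of $\{1,\ldots,m-1\}$ according to whether they contain $i$: every such $\JJ$ equals either $\JJ'$ or $\JJ'\cup\{i\}$ for a unique $\JJ'\subset L$. Writing $\gamma_{\JJ'}=\big(\prod_{j\in\JJ'}\alpha_j\big)\prod_{j\in L\smallsetminus\JJ'}(1-\alpha_j)\in\RP$, which is free of $\alpha_i$, \eqref{e:beta} gives $\beta_{m;\JJ'}=(1-\alpha_i)\gamma_{\JJ'}$ and $\beta_{m;\JJ'\cup\{i\}}=\alpha_i\gamma_{\JJ'}$. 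Summing the two members of each pair then displays $\theta_m$ as an affine function of $\alpha_i$,
\begin{equation}
\theta_m=\sum_{\JJ'\subset L}\gamma_{\JJ'}\,\sigma_{m;\JJ'}
+\alpha_i\sum_{\JJ'\subset L}\gamma_{\JJ'}\big(\sigma_{m;\JJ'\cup\{i\}}-\sigma_{m;\JJ'}\big),
\end{equation}
whose slope in $\alpha_i$ is the second sum. Since each $\gamma_{\JJ'}$ is nonnegative, it therefore suffices to show that $\sigma_{m;\JJ'\cup\{i\}}\geq\sigma_{m;\JJ'}$ for every $\JJ'\subset L$. This affine reformulation is also what lets the argument sidestep any differentiability concern, monotonicity in $\alpha_i\in[0,1]$ being equivalent to nonnegativity of the slope.

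This last inequality is the heart of the matter and the step I expect to require the most care. Given $\JJ'=\{j_1<\cdots<j_k\}\subset L$ and the conventions $j_0=0$, $j_{k+1}=m$, the index $i$ lies strictly between two consecutive breakpoints, say $j_\ell<i<j_{\ell+1}$, because $i\notin\JJ'$. Inserting $i$ into $\JJ'$ leaves every factor of \eqref{e:wcal} unchanged except that it splits the single factor $\|W_{j_{\ell+1}}\circ\cdots\circ W_{j_\ell+1}\|$ into the product $\|W_{j_{\ell+1}}\circ\cdots\circ W_{i+1}\|\,\|W_i\circ\cdots\circ W_{j_\ell+1}\|$. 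Submultiplicativity of the operator norm yields
\begin{equation}
\|W_{j_{\ell+1}}\circ\cdots\circ W_{j_\ell+1}\|
\leq\|W_{j_{\ell+1}}\circ\cdots\circ W_{i+1}\|\,\|W_i\circ\cdots\circ W_{j_\ell+1}\|,
\end{equation}
and, since all remaining factors are common and nonnegative, $\sigma_{m;\JJ'}\leq\sigma_{m;\JJ'\cup\{i\}}$, as required. Hence $\theta_m$ is nondecreasing in each $\alpha_i$. Finally, to compare $\theta_m(\alpha_1,\ldots,\alpha_{m-1})$ with $\theta_m(\alpha'_1,\ldots,\alpha'_{m-1})$ under $\alpha_i\leq\alpha'_i$, I would raise the coordinates one at a time from $\alpha_i$ to $\alpha'_i$; each move keeps $\theta_m$ nondecreasing by the above, and chaining these $m-1$ inequalities gives the claim.
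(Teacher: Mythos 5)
Your proposal is correct and follows essentially the same route as the paper: the paper also factors out $\beta_{m,l;\JJ}$ (your $\gamma_{\JJ'}$) to display $\theta_m$ as an affine function of each $\alpha_l$ separately, and uses the inequality $\sigma_{m;\JJ}\leq\sigma_{m;\JJ\cup\{l\}}$ from \eqref{e:wcal} to conclude that the slope (there computed as $\partial\theta_m/\partial\alpha_l$) is nonnegative. Your explicit submultiplicativity argument for that inequality and the coordinate-by-coordinate chaining are exactly what the paper leaves implicit.
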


\begin{remark}
\rm
Proposition~\ref{p:alphagencond} suggests that, in terms of 
stability, it is better
to use proximal activation operators, such as those listed in
Example \ref{ex:23}\ref{ex:23i}--\ref{ex:23ii}, than
$\alpha$-averaged activation operators for which $\alpha>1/2$, 
such as those mentioned in Example~\ref{ex:23}\ref{ex:23iv}.
\end{remark}

\section{Networks using separable activation operators}
\label{sec:4}

We show that sharper Lipschitz constants can be
derived in the case of networks featuring the type of separable
structure described in Example~\ref{ex:93}. 
Note that this class of networks is the most commonly used, 
standard convnets being special cases.
The following notation will be used.

\begin{notation}
Let $\HH$ be a separable real Hilbert space, let
$\emp\neq\bK\subset\NN$, let $E=(e_k)_{k\in\bK}$ be an
orthonormal basis of $\HH$, and let $I$ be a nonempty
bounded subset of $\RR$. Then
\begin{equation}
\label{e:defLambda}
\mathscr{D}_I(E)=\Menge{\Lambda\colon\HH\to\HH\colon
x\mapsto\sum_{k\in\bK}\lambda_k\scal{x}{e_k}e_k}{
\{\lambda_k\}_{k\in\bK}\subset I}.
\end{equation}
\end{notation}

\subsection{General results}

\begin{theorem}
\label{t:sep}
Consider the setting of Model~\ref{m:1} with $m\geq 2$. For 
every $i\in\{1,\ldots,m-1\}$, suppose that $\HH_i$ is separable,
let $\emp\neq\bK_i\subset\NN$, let ${E_i}=(e_{i,k})_{k\in\bK_i}$ 
be an orthonormal basis of $\HH_i$, and,
for every $k\in\bK_i$, let $\varrho_{i,k}\colon\RR\to\RR$ be 
$\alpha_i$-averaged and such that $\varrho_{i,k}(0)=0$.
Assume that
\begin{equation}
\label{e:stgermain1}
(\forall i\in\{1,\ldots,m-1\})\quad
R_i\colon\HH_i\to\HH_i\colon x\mapsto\sum_{k\in\bK_i}
\varrho_{i,k}(\scal{x}{e_{i,k}})e_{i,k}
\end{equation}
and define
\begin{equation}
\label{e:thetambin}
\vartheta_m= 
\sup_{\substack{\Lambda_1\in
\mathscr{D}_{\{1-2\alpha_1,1\}}(E_1)\\\vdots\\\Lambda_{m-1}\in
\mathscr{D}_{\{1-2\alpha_{m-1},1\}}(E_{m-1})}} \|W_m\circ
\Lambda_{m-1}\circ\cdots\circ \Lambda_1\circ W_1\|.
\end{equation}
Then the following hold:
\begin{enumerate}
\itemsep0mm 
\item
\label{t:sepi}
$\vartheta_m$ is a Lipschitz constant of the operator $T$ of
\eqref{e:defTi}.
\item
\label{t:sepii}
Define $\theta_m$ as in \eqref{e:defthetam}. Then
$\|W_m\circ\cdots\circ W_1\|\leq\vartheta_m\leq \theta_m$.
\end{enumerate}
\end{theorem}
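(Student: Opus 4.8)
The plan is to treat the two assertions separately: part~\ref{t:sepi} rests on a secant-slope representation of the separable activation operators followed by a reduction from diagonal operators with entries in an interval to those with entries in a two-point set, while part~\ref{t:sepii} follows by writing each such extreme diagonal operator as an affine combination of the identity and a contraction and expanding.

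For part~\ref{t:sepi}, I would fix two inputs $x$ and $y$ in $\HH_0$ and propagate them through the network, setting $x_0=x$, $y_0=y$, and recursively $x_i=R_i(W_ix_{i-1}+b_i)$ and $y_i=R_i(W_iy_{i-1}+b_i)$ for $i\in\{1,\ldots,m\}$, so that $Tx=x_m$, $Ty=y_m$, and the biases cancel. The elementary lemma I would establish first is that a scalar $\alpha$-averaged map $\varrho$ satisfies, for all $s,t\in\RR$, $\varrho(s)-\varrho(t)=\delta(s-t)$ for some $\delta\in[1-2\alpha,1]$: writing $\varrho=(1-\alpha)\Id+\alpha q$ with $q$ nonexpansive, the secant slope of $\varrho$ equals $(1-\alpha)$ plus $\alpha$ times the secant slope of $q$, which lies in $[-1,1]$ (the case $s=t$ being trivial). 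Applying this coordinatewise in the basis $E_i$ to \eqref{e:stgermain1} produces, for each $i\in\{1,\ldots,m-1\}$, a diagonal operator $\Lambda_i\in\mathscr{D}_{[1-2\alpha_i,1]}(E_i)$ (depending on $x,y$) with $R_i(W_ix_{i-1}+b_i)-R_i(W_iy_{i-1}+b_i)=\Lambda_iW_i(x_{i-1}-y_{i-1})$. Telescoping, and using only that $R_m$ is nonexpansive at the last layer, yields $\|Tx-Ty\|\leq\|W_m\Lambda_{m-1}W_{m-1}\cdots\Lambda_1W_1\|\,\|x-y\|$.

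It then remains to bound this operator norm, uniformly over all diagonal operators with entries in the \emph{intervals} $[1-2\alpha_i,1]$, by the supremum $\vartheta_m$ taken only over the two-point sets $\{1-2\alpha_i,1\}$, and this box-to-vertex reduction is the main obstacle. For fixed unit vectors $\xi,\eta$ and fixed remaining factors, the quantity $\scal{W_m\Lambda_{m-1}\cdots\Lambda_1W_1\,\xi}{\eta}$ is a linear function of the diagonal entries $(\lambda_{i,k})_{k\in\bK_i}$ of a single $\Lambda_i$; since maximizing a linear functional over a product of intervals is attained by taking each entry at an endpoint, the supremum over $\mathscr{D}_{[1-2\alpha_i,1]}(E_i)$ agrees with that over $\mathscr{D}_{\{1-2\alpha_i,1\}}(E_i)$. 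Using $\|A\|=\sup_{\|\xi\|\leq1,\|\eta\|\leq1}\scal{A\xi}{\eta}$, interchanging the (order-immaterial) suprema over $\xi,\eta$ and over the $\Lambda_i$, and peeling off $\Lambda_1,\ldots,\Lambda_{m-1}$ one at a time reduces every interval to its two endpoints, giving $\|W_m\Lambda_{m-1}\cdots\Lambda_1W_1\|\leq\vartheta_m$ for every admissible choice; the coordinatewise linearity is precisely what makes this valid even when the bases are infinite, avoiding any compactness or extreme-point machinery.

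For part~\ref{t:sepii}, the lower bound is immediate, since choosing every $\Lambda_i=\Id$ (all entries equal to $1\in\{1-2\alpha_i,1\}$) shows $\vartheta_m\geq\|W_m\circ\cdots\circ W_1\|$. For the upper bound, I would write any vertex operator $\Lambda_i\in\mathscr{D}_{\{1-2\alpha_i,1\}}(E_i)$ as $\Lambda_i=(1-\alpha_i)\Id+\alpha_iD_i$, where $D_i$ is diagonal with entries $+1$ where $\Lambda_i$ has entry $1$ and $-1$ where $\Lambda_i$ has entry $1-2\alpha_i$, so that $\|D_i\|\leq1$. Substituting into $W_m\Lambda_{m-1}W_{m-1}\cdots\Lambda_1W_1$ and expanding the product over the $2^{m-1}$ choices indexes the terms by subsets $\JJ\subseteq\{1,\ldots,m-1\}$, the term for $\JJ$ carrying the coefficient $\beta_{m;\JJ}$ of \eqref{e:beta} and inserting $D_j$ for $j\in\JJ$ and $\Id$ for $j\notin\JJ$. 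The triangle inequality, submultiplicativity, and $\|D_j\|\leq1$ then collapse each such term into the product of block norms $\sigma_{m;\{j_1,\ldots,j_k\}}$ of \eqref{e:wcal} (the $\JJ=\emp$ term giving $\|W_m\circ\cdots\circ W_1\|$), whence $\|W_m\Lambda_{m-1}\cdots\Lambda_1W_1\|\leq\theta_m$; taking the supremum over all vertex choices yields $\vartheta_m\leq\theta_m$.
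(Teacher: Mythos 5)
Your argument is correct, and its skeleton coincides with the paper's: the secant-slope representation $R_iu-R_iv=\Lambda_i(u-v)$ with $\Lambda_i\in\mathscr{D}_{[1-2\alpha_i,1]}(E_i)$ (the paper isolates this as Lemma~\ref{l:alphascal}), the telescoping to $\|Tx-Ty\|\leq\|W_m\circ\Lambda_{m-1}\circ\cdots\circ\Lambda_1\circ W_1\|\,\|x-y\|$ using only nonexpansiveness of the last activation, the lower bound via $\Lambda_i=\Id$, and the upper bound via $\Lambda_i=(1-\alpha_i)\Id+\alpha_iD_i$ with $\|D_i\|\leq 1$. The one place where you genuinely diverge is the box-to-vertex reduction. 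The paper views the operator norm as a function $\psi$ of the tuple of diagonal blocks, notes that it is convex in each block, and invokes a general lemma (Lemma~\ref{l:sup}) asserting $\sup\psi(\boldsymbol{S})=\sup\psi(\conv\boldsymbol{S})$; you instead write $\|A\|=\sup_{\|\xi\|\leq 1,\|\eta\|\leq 1}\scal{A\xi}{\eta}$ and use the fact that, for fixed $\xi,\eta$ and fixed remaining factors, the pairing is an affine, absolutely summable function of the entries of a single $\Lambda_i$, so its supremum over the box $[1-2\alpha_i,1]^{\bK_i}$ is attained on the two-point vertex set, and you peel the layers off one at a time. Both routes work, but yours is more elementary and is arguably cleaner when some $\bK_i$ is infinite: in that case $[1-2\alpha_i,1]^{\bK_i}$ strictly contains the convex hull (finite convex combinations) of $\{1-2\alpha_i,1\}^{\bK_i}$, so the paper's identification of the box with $\conv\boldsymbol{S}$ implicitly needs an extra approximation step that your coordinatewise linear optimization bypasses. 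For part~(ii), your explicit expansion over subsets $\JJ\subset\{1,\ldots,m-1\}$ with coefficients $\beta_{m;\JJ}$ and block norms $\sigma_{m;\JJ}$ is simply an unrolled version of the inductive estimate the paper re-uses from Theorem~\ref{t:propLipgen}, so that step is essentially identical.
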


\begin{remark}
\rm
An expression similar to \eqref{e:thetambin} was proposed 
empirically in \cite{Scam18} for a multilayer perceptron 
operating on finite-dimensional spaces
under the additional assumption that the activation 
operators are continuously differentiable 
and firmly nonexpansive.
\end{remark}

\begin{remark}
\rm
\label{r:stgermain1}
In Theorem~\ref{t:sep}, make the additional assumption that, 
for some\linebreak $i\in\{1,\ldots,m-1\}$, the functions
$(\varrho_{i,k})_{k\in\bK_i}$ are increasing. Then it follows from 
Proposition~\ref{p:23}\ref{p:23iii} that there exist functions 
$(\phi_{i,k})_{k\in\bK_i}$ in $\Gamma_0(\RR)$ 
such that $(\forall k\in\bK_i)$ $\varrho_{i,k}=\prox_{\phi_{i,k}}$. 
In addition, for every $k\in\bK_i$, since $\varrho_{i,k}(0)=0$
and since the set of minimizers of $\phi_{i,k}$ coincides with the
set of fixed points of $\prox_{\phi_{i,k}}$
\cite[Proposition~12.29]{Livre1}, we deduce that 
$\phi_{i,k}$ is minimized at $0$.
Furthermore, 
$\alpha_i=1/2$ and $R_i=\prox_{\varphi_i}$, where 
$(\forall x\in\HH_i)$
$\varphi_i(x)=\sum_{k\in\bK_i}\phi_{i,k}(\scal{x}{e_{i,k}})$.
Such a construction is used in \cite{Siop07,Smms05}.
\end{remark}

As in Proposition~\ref{p:alphagencond}, the Lipschitz constant 
exhibited in Theorem~\ref{t:sep} turns out to be a componentwise
increasing function of the averagedness constants of the activation
operators.

\begin{proposition}
\label{p:35}
Consider the setting of Model~\ref{m:1} with $m\geq 2$. For every 
$i\in\{1,\ldots,m-1\}$, suppose that $\HH_i$ is separable, let 
$\emp\neq\bK_i\subset\NN$, and let ${E_i}=(e_{i,k})_{k\in\bK_i}$ 
be an orthonormal basis of $\HH_i$. Define 
$\vartheta_m\colon [0,1]^{m-1}\to\RP$ by
\begin{equation}
\label{e:thetambinvar}
\vartheta_m\colon 
(\alpha_1,\ldots,\alpha_{m-1})\mapsto
\sup_{\substack{\Lambda_1\in
\mathscr{D}_{\{1-2\alpha_1,1\}}(E_1)\\\vdots\\\Lambda_{m-1}\in
\mathscr{D}_{\{1-2\alpha_{m-1},1\}}(E_{m-1})}} \|W_m\circ
\Lambda_{m-1}\circ\cdots\circ \Lambda_1\circ W_1\|.
\end{equation}
Let $(\alpha_i)_{1\leq i\leq m-1}\in [0,1]^{m-1}$ and
$(\alpha'_i)_{1\leq i\leq m-1}\in [0,1]^{m-1}$
be such that
$(\forall i\in\{1,\ldots,m-1\})$ $\alpha_i \leq\alpha'_i$.
Then $\vartheta_m(\alpha_1,\ldots,\alpha_{m-1})\leq
\vartheta_m(\alpha'_1,\ldots,\alpha'_{m-1})$.
\end{proposition}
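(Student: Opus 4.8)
The plan is to prove that the value of $\vartheta_m$ in \eqref{e:thetambinvar} is unchanged when each two‑point constraint set $\{1-2\alpha_i,1\}$ is enlarged to the closed interval $[1-2\alpha_i,1]$ it spans; granting this \emph{interval reformulation}, the asserted monotonicity reduces to an elementary set inclusion. For a tuple $\boldsymbol\Lambda=(\Lambda_1,\dots,\Lambda_{m-1})$ write $L(\boldsymbol\Lambda)=W_m\circ\Lambda_{m-1}\circ\cdots\circ\Lambda_1\circ W_1$. For each $i$, with the remaining blocks held fixed, the map $\Lambda_i\mapsto L(\boldsymbol\Lambda)$ is linear; identifying $\Lambda_i$ with its diagonal sequence $(\lambda_{i,k})_{k\in\bK_i}$, it follows that $(\lambda_{i,k})_{k\in\bK_i}\mapsto\|L(\boldsymbol\Lambda)\|$ is convex (a norm composed with an affine map) and norm‑continuous, since $\|(\Lambda_i-\Lambda_i')y\|\leq(\sup_{k\in\bK_i}|\lambda_{i,k}-\lambda_{i,k}'|)\,\|y\|$ for all $y\in\HH_i$. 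All suprema in play are finite, since each $\|\Lambda_i\|\leq 1$ and the $W_i$ are bounded.

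First I would prove the one‑block identity: for fixed values of the other blocks, $\sup_{\Lambda_i\in\mathscr{D}_{[1-2\alpha_i,1]}(E_i)}\|L(\boldsymbol\Lambda)\|=\sup_{\Lambda_i\in\mathscr{D}_{\{1-2\alpha_i,1\}}(E_i)}\|L(\boldsymbol\Lambda)\|$. The inequality ``$\geq$'' is immediate from $\{1-2\alpha_i,1\}\subset[1-2\alpha_i,1]$. For ``$\leq$'', observe that finite convex combinations of operators in $\mathscr{D}_{\{1-2\alpha_i,1\}}(E_i)$ are dense, in the supremum norm of the diagonal, in $\mathscr{D}_{[1-2\alpha_i,1]}(E_i)$ (round each diagonal entry to a common finite grid of $[1-2\alpha_i,1]$ and realize each grid level as a subset‑sum of a uniform weight vector). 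By convexity and Jensen's inequality, the value of $\|L(\boldsymbol\Lambda)\|$ at any such convex combination is at most $\sup_{\Lambda_i\in\mathscr{D}_{\{1-2\alpha_i,1\}}(E_i)}\|L(\boldsymbol\Lambda)\|$, and norm‑continuity carries this bound to the sup‑norm closure. Applying this identity in turn to the finitely many blocks $i=1,\dots,m-1$ yields the interval reformulation: $\vartheta_m(\alpha_1,\dots,\alpha_{m-1})=\sup\|L(\boldsymbol\Lambda)\|$, the supremum now being taken over $\Lambda_i\in\mathscr{D}_{[1-2\alpha_i,1]}(E_i)$, $1\leq i\leq m-1$.

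To conclude, suppose $\alpha_i\leq\alpha_i'$ for every $i$. Then $1-2\alpha_i'\leq 1-2\alpha_i\leq 1$, so $[1-2\alpha_i,1]\subseteq[1-2\alpha_i',1]$ and a fortiori $\mathscr{D}_{\{1-2\alpha_i,1\}}(E_i)\subseteq\mathscr{D}_{[1-2\alpha_i',1]}(E_i)$. Hence $\vartheta_m(\alpha_1,\dots,\alpha_{m-1})$, being the supremum of $\|L(\boldsymbol\Lambda)\|$ over the product of the two‑point sets $\mathscr{D}_{\{1-2\alpha_i,1\}}(E_i)$, is dominated by its supremum over the larger product of intervals $\mathscr{D}_{[1-2\alpha_i',1]}(E_i)$, which by the interval reformulation applied to $(\alpha_1',\dots,\alpha_{m-1}')$ equals exactly $\vartheta_m(\alpha_1',\dots,\alpha_{m-1}')$.

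The one delicate point is the one‑block identity in infinite dimensions. When each $\HH_i$ is finite‑dimensional it is simply the statement that a convex function on a box attains its maximum at a vertex; in the general separable case one must justify that the supremum over the interval box coincides with that over its two‑point ``corners,'' which is what the density‑and‑continuity argument supplies, the separate convexity and the Lipschitz dependence on each diagonal sequence persisting unchanged on infinite‑dimensional $\HH_i$. I expect this reformulation to be precisely the device underlying the proof of Theorem~\ref{t:sep}\ref{t:sepii}; if so, it can be invoked there rather than reestablished.
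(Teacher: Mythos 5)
Your proof is correct and its overall architecture coincides with the paper's: both reduce the claim to the identity asserting that the supremum in \eqref{e:thetambinvar} is unchanged when each two-point set $\{1-2\alpha_i,1\}$ is replaced by the interval $[1-2\alpha_i,1]$, after which the monotonicity is just the inclusion $[1-2\alpha_i,1]\subseteq[1-2\alpha'_i,1]$. The paper invokes this identity directly as \eqref{e:thetaminterv}, established in the proof of Theorem~\ref{t:sep}\ref{t:sepi} via Lemma~\ref{l:sup}, which shows that a separately convex function has the same supremum over a product $\boldsymbol{S}=S_1\times\cdots\times S_q$ and over $\conv\boldsymbol{S}$. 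Where you genuinely add something is in the passage from $\conv\boldsymbol{S}$ to the full interval box: when some $\bK_i$ is infinite, the convex hull of $\{1-2\alpha_i,1\}^{\bK_i}$ consists only of diagonal sequences taking finitely many values, hence is a proper subset of $[1-2\alpha_i,1]^{\bK_i}$, and the paper's step $\sup\psi(C)=\sup\psi(\conv S)$ tacitly requires exactly the density-plus-continuity argument you supply (uniform-grid rounding realized as a subset-sum convex combination, together with the Lipschitz estimate $\|(\Lambda_i-\Lambda_i')y\|\leq\sup_{k\in\bK_i}|\lambda_{i,k}-\lambda'_{i,k}|\,\|y\|$). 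So your proof is not shorter, but it is the more complete rendering of the same argument in the infinite-dimensional setting; in finite dimensions the two coincide, and your closing remark is accurate: the device is the one already used for Theorem~\ref{t:sep} and can simply be cited there.
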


\subsection{Extension to non-Hilbertian norms}

In certain applications, Hilbertian norms may not be the most 
relevant measures to quantify errors. We now state a variant of 
Theorem~\ref{t:sep} which holds for alternative norms.
It involves embeddings of Hilbert spaces;
standard examples can be found in \cite{Zeid19}. Let us also point
out that these embedding conditions are automatically satisfied if
the spaces are finite-dimensional.

\begin{proposition}
\label{p:sepbis}
Consider the setting of Model~\ref{m:1} with $m\geq 2$.
For every $i\in\{1,\ldots,m\}$, suppose that $\HH_i$ is separable,
let $\emp\neq\bK_i\subset\NN$, let ${E_i}=(e_{i,k})_{k\in\bK_i}$ 
be an orthonormal basis of $\HH_i$, and,
for every $k\in\bK_i$, let $\varrho_{i,k}\colon\RR\to\RR$ be 
$\alpha_i$-averaged and such that $\varrho_{i,k}(0)=0$.
Let $\GG_0$ be the normed space obtained by equipping 
the vector space underlying $\HH_0$ with a norm for which 
$\GG_0$ is continuously embedded in $\HH_0$, and let $\GG_m$ be
the normed space obtained by equipping the vector space 
underlying $\HH_m$ with a norm
for which $\HH_m$ is continuously embedded in $\GG_m$.
Assume that 
\begin{equation}
(\forall i\in\{1,\ldots,m\})\quad
R_i\colon\HH_i\to\HH_i\colon
x\mapsto\sum_{k\in\bK_i}
\varrho_{i,k}\big(\scal{x}{e_{i,k}}\big)e_{i,k}.
\end{equation}
Then 
\begin{equation}
\label{e:thetambinbis}
\vartheta_m= 
\sup_{\substack{\Lambda_1\in\mathscr{D}_{\{1-2\alpha_1,1\}}
(E_1)\\\vdots\\
\Lambda_m\in\mathscr{D}_{\{1-2\alpha_m,1\}}(E_m)}} 
\|\Lambda_m\circ W_m
\circ\cdots\circ \Lambda_1\circ W_1\|_{\GG_0,\GG_m}
\end{equation} 
is a Lipschitz constant of $T\colon\GG_0\to\GG_m$.
\end{proposition}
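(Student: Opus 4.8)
The plan is to run the coordinatewise ``mean value'' argument underlying Theorem~\ref{t:sep}, with two adjustments: the terminal activation operator $R_m$ is now carried along (since the output norm is not Hilbertian, it can no longer be absorbed by mere nonexpansiveness, which is why the supremum in \eqref{e:thetambinbis} ranges up to $\Lambda_m$), and the resulting linear operator is measured in the $\GG_0\to\GG_m$ operator norm. The first ingredient is a scalar slope bound: fix $i\in\{1,\ldots,m\}$ and $k\in\bK_i$; since $\varrho_{i,k}$ is $\alpha_i$-averaged, there is a nonexpansive $q_{i,k}\colon\RR\to\RR$ with $\varrho_{i,k}=(1-\alpha_i)\Id+\alpha_i q_{i,k}$, whence for all $s\neq t$,
\[
\frac{\varrho_{i,k}(s)-\varrho_{i,k}(t)}{s-t}
=(1-\alpha_i)+\alpha_i\,\frac{q_{i,k}(s)-q_{i,k}(t)}{s-t}\in[1-2\alpha_i,1],
\]
using $|q_{i,k}(s)-q_{i,k}(t)|\leq|s-t|$ and $\alpha_i\in\czeroun$.

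Next I would linearize each layer. Fix $x,y$ in the common underlying vector space of $\GG_0$ and $\HH_0$, and propagate the activations $a_0=x$, $a'_0=y$, $a_i=T_i\cdots T_1x$, $a'_i=T_i\cdots T_1y$. For each $i$ the bias cancels, and since $R_i$ acts diagonally in the orthonormal basis $E_i$,
\[
a_i-a'_i=R_i(W_ia_{i-1}+b_i)-R_i(W_ia'_{i-1}+b_i)
=\sum_{k\in\bK_i}\lambda_{i,k}\scal{W_i(a_{i-1}-a'_{i-1})}{e_{i,k}}e_{i,k}
=\Lambda_iW_i(a_{i-1}-a'_{i-1}),
\]
where $\lambda_{i,k}$ is the difference quotient of $\varrho_{i,k}$ between the $k$th coordinates of the two pre-activations (set $\lambda_{i,k}=1$ when they coincide). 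By the slope bound, $\Lambda_i\in\mathscr{D}_{[1-2\alpha_i,1]}(E_i)$. Telescoping over the layers yields $Tx-Ty=L(x-y)$ with $L=\Lambda_m\circ W_m\circ\cdots\circ\Lambda_1\circ W_1$, and the continuous embeddings $\GG_0\hookrightarrow\HH_0$, $\HH_m\hookrightarrow\GG_m$ guarantee that $L$ is bounded as an operator $\GG_0\to\GG_m$, so that $\|Tx-Ty\|_{\GG_m}\leq\|L\|_{\GG_0,\GG_m}\|x-y\|_{\GG_0}$. Taking the supremum over all admissible tuples $(\Lambda_1,\ldots,\Lambda_m)$ bounds the Lipschitz quotient by the supremum of $\|L\|_{\GG_0,\GG_m}$ over $\Lambda_i\in\mathscr{D}_{[1-2\alpha_i,1]}(E_i)$.

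It then remains to replace each closed interval $[1-2\alpha_i,1]$ by the two-point set $\{1-2\alpha_i,1\}$, i.e.\ to identify this supremum with $\vartheta_m$ in \eqref{e:thetambinbis}. Since $L$ is affine in each scalar entry $\lambda_{i,k}$ when the remaining entries are frozen, the map $\lambda_{i,k}\mapsto\|L\|_{\GG_0,\GG_m}$ is convex; its maximum over $[1-2\alpha_i,1]$ is therefore attained at an endpoint. The sets $\mathscr{D}_{[1-2\alpha_i,1]}(E_i)$ are convex with extreme points exactly $\mathscr{D}_{\{1-2\alpha_i,1\}}(E_i)$, so a convex function's supremum over them equals its supremum over the endpoints, giving the claimed value $\vartheta_m$ and hence $\|Tx-Ty\|_{\GG_m}\leq\vartheta_m\|x-y\|_{\GG_0}$.

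The main obstacle is this last reduction in the infinite-dimensional setting: because there are infinitely many entries $\lambda_{i,k}$, the ``push each coordinate to an endpoint'' step cannot be a finite induction and requires a genuine separate-convexity / extreme-point statement on the (weak-$*$ compact) box $\prod_{i,k}[1-2\alpha_i,1]$. The clean route is to invoke continuity and blockwise affineness of $(\Lambda_i)\mapsto\|L\|_{\GG_0,\GG_m}$ together with the extreme-point structure of $\mathscr{D}_{[1-2\alpha_i,1]}(E_i)$; the embedding hypotheses are precisely what keeps this norm finite and continuous, so that the supremum is well defined and the endpoint reduction is legitimate.
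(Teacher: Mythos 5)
Your proposal is correct and follows essentially the same route as the paper: the coordinatewise slope bound and layer-by-layer linearization $Rx-Ry=\Lambda(x-y)$ with $\Lambda\in\mathscr{D}_{[1-2\alpha_i,1]}(E_i)$ is exactly the paper's Lemma~\ref{l:alphascal}, the embeddings are used in the same way to make $\Lambda_m\circ W_m\circ\cdots\circ\Lambda_1\circ W_1$ bounded from $\GG_0$ to $\GG_m$, and the reduction from the interval-valued diagonal operators to the two-point sets is the paper's Lemma~\ref{l:sup} (separate convexity over a product of sets), applied blockwise as in the proof of Theorem~\ref{t:sep}. The ``main obstacle'' you flag at the end is handled in the paper precisely by that lemma, which treats each whole sequence $(\lambda_{i,k})_{k\in\bK_i}$ as a single coordinate rather than pushing entries to endpoints one at a time.
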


\begin{corollary}
\label{c:sepbis}
Consider the setting of Model~\ref{m:1} with $m\geq 2$. Define
$\GG_0$ and $(R_i)_{1\leq i\leq m}$ as in 
Proposition~\ref{p:sepbis}, let $p\in\left[1,\pinf\right]$, and 
let $\{\omega_k\}_{k\in\bK_m}\subset\RPP$ be 
such that one of the following holds:
\begin{enumerate}
\itemsep0mm 
\item
$p\in\left[1,2\right[$ and 
$\sum_{k\in\bK_m}\omega_k^{2/(2-p)}<\pinf$.
\item
$p\in\left[2,\pinf\right]$ and $\sup_{k\in\bK_m}\omega_k<\pinf$.
\end{enumerate}
Let $\GG_m$ be the normed space obtained by equipping the 
vector space underlying $\HH_m$ with the norm
\begin{equation}
\label{e:normlp}
\big(\forall x\in\HH_m\big)\quad
\|x\|_{\GG_m}=
\begin{cases}
\displaystyle\Bigg|\sum_{k\in\bK_m}
\omega_k|\scal{x}{e_{m,k}}|^{p}\Bigg|^{1/p},
&\text{if}\;\;p<\pinf;\\
{\displaystyle\sup_{k\in\bK_m}}\omega_k|\scal{x}{e_{m,k}}|,
&\text{if}\;\;p=\pinf.
\end{cases}
\end{equation}
Then a Lipschitz constant of $T\colon\GG_0\to\GG_m$ is
\begin{equation}
\label{e:dl231}
\vartheta_m=\sup_{\substack{\Lambda_1\in 
\mathscr{D}_{\{1-2\alpha_1,1\}}(E_1)\\
\vdots\\
\Lambda_{m-1}\in\mathscr{D}_{\{1-2\alpha_{m-1},1\}}(E_{m-1})}} 
\|W_m\circ \Lambda_{m-1}\circ\cdots\circ\Lambda_1
\circ W_1\|_{\GG_0,\GG_m}.
\end{equation}
\end{corollary}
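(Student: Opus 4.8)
The plan is to obtain the corollary as a specialization of Proposition~\ref{p:sepbis}: first verify that the weighted norm \eqref{e:normlp} meets the embedding hypothesis of that proposition, and then show that the extra diagonal operator $\Lambda_m$ appearing in \eqref{e:thetambinbis} can be discarded, turning that expression into \eqref{e:dl231}.

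\textbf{Checking the hypotheses.} Proposition~\ref{p:sepbis} requires that $\HH_m$ be continuously embedded in $\GG_m$, i.e., that there exist $c\in\RPP$ with $\|x\|_{\GG_m}\le c\|x\|_{\HH_m}$ for every $x\in\HH_m$. Writing $\xi_k=\scal{x}{e_{m,k}}$, Parseval's identity gives $\|x\|_{\HH_m}=(\sum_{k\in\bK_m}|\xi_k|^2)^{1/2}$, so I would establish the bound from the two stated cases. In case (ii) it is immediate: for $p=\pinf$ it follows from $\sup_k\omega_k<\pinf$, and for $p\in[2,\pinf[$ from $\sup_k\omega_k<\pinf$ together with the inclusion $\ell^2\subset\ell^p$. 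In case (i), with $p\in[1,2[$, I would apply Hölder's inequality with the conjugate exponents $2/p$ and $2/(2-p)$:
\begin{equation}
\sum_{k\in\bK_m}\omega_k|\xi_k|^p
\le\Bigg(\sum_{k\in\bK_m}|\xi_k|^2\Bigg)^{p/2}
\Bigg(\sum_{k\in\bK_m}\omega_k^{2/(2-p)}\Bigg)^{(2-p)/2},
\end{equation}
which is finite by hypothesis and yields $\|x\|_{\GG_m}\le(\sum_k\omega_k^{2/(2-p)})^{(2-p)/(2p)}\|x\|_{\HH_m}$. With $\GG_0$ and the $R_i$ as prescribed, Proposition~\ref{p:sepbis} then applies and shows that the supremum \eqref{e:thetambinbis}, taken over $\Lambda_1,\ldots,\Lambda_m$, is a Lipschitz constant of $T\colon\GG_0\to\GG_m$.

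\textbf{Eliminating $\Lambda_m$.} The conceptual heart of the proof is that the norm \eqref{e:normlp} is expressed coordinatewise in the very basis $E_m$ with respect to which each $\Lambda_m\in\mathscr{D}_{\{1-2\alpha_m,1\}}(E_m)$ is diagonal. Since $\alpha_m\in[0,1]$, each entry $\lambda_k\in\{1-2\alpha_m,1\}$ obeys $|\lambda_k|\le 1$, and because $\scal{\Lambda_m x}{e_{m,k}}=\lambda_k\xi_k$, inspection of \eqref{e:normlp} (in both the $p<\pinf$ and $p=\pinf$ forms) gives $\|\Lambda_m x\|_{\GG_m}\le\|x\|_{\GG_m}$; that is, $\Lambda_m$ is nonexpansive on $\GG_m$. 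Hence, for the fixed operator $A=W_m\circ\Lambda_{m-1}\circ\cdots\circ\Lambda_1\circ W_1$, one has $\|\Lambda_m\circ A\|_{\GG_0,\GG_m}\le\|A\|_{\GG_0,\GG_m}$ for every admissible $\Lambda_m$, while equality holds for $\Lambda_m=\Id\in\mathscr{D}_{\{1-2\alpha_m,1\}}(E_m)$ (all $\lambda_k=1$). Thus the inner supremum over $\Lambda_m$ equals $\|A\|_{\GG_0,\GG_m}$, and taking the remaining supremum over $\Lambda_1,\ldots,\Lambda_{m-1}$ identifies \eqref{e:thetambinbis} with \eqref{e:dl231}; the Lipschitz conclusion of Proposition~\ref{p:sepbis} then carries over unchanged.

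I expect the only genuinely delicate point to be the embedding estimate of the first step, in particular matching the exponents in case (i); the elimination of $\Lambda_m$, although it is what makes the statement interesting, reduces to the one-line nonexpansiveness bound once the coordinatewise structure of the norm is exploited.
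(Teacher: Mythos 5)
Your proposal is correct and follows essentially the same route as the paper: verify the continuous embedding of $\HH_m$ in $\GG_m$ via H\"older's inequality so that Proposition~\ref{p:sepbis} applies, then discard $\Lambda_m$ by observing that $\|\Lambda_m\|_{\GG_m,\GG_m}\leq 1$ (since $|\lambda_{m,k}|\leq 1$ and the norm is coordinatewise in $E_m$) while $\Lambda_m=\Id$ attains equality. The only cosmetic difference is that the paper phrases the final step using the interval sets $\mathscr{D}_{[1-2\alpha_i,1]}(E_i)$ inherited from Lemma~\ref{l:sup}, whereas you work directly with the two-point sets of \eqref{e:thetambinbis}; both are valid.
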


\subsection{Networks with positive weights}

Under certain positivity assumptions, the constant 
$\vartheta_m$ of \eqref{e:thetambin} and 
\eqref{e:dl231} can be simplified.

\begin{assumption}
\label{as:netpos}
Consider the setting of Model~\ref{m:1} with $m\geq 2$. For every 
$i\in\{0,\ldots,m\}$, suppose that $\HH_i$ is separable, let 
$\emp\neq\bK_i\subset\NN$, and let ${E_i}=(e_{i,k})_{k\in\bK_i}$ 
be an orthonormal basis of $\HH_i$. 
For every $(k_0,\ldots,k_m)\in
\bK_0\times\cdots\times\bK_m$, set
\begin{equation}
\label{e:elt}
\mu_{k_0,\ldots,k_m}=\scal{W_1e_{0,k_0}}{e_{1,k_1}}
\cdots\scal{W_me_{m-1,k_{m-1}}}{e_{m,k_m}}.
\end{equation}
We suppose that
\begin{multline} 
\label{e:condpos}
(\forall (k_0,\ldots,k_m)\in\bK_0\times\cdots\times\bK_m)
(\forall (l_0,\ldots,l_{m-1})\in
\bK_0\times\cdots\times\bK_{m-1})\\
\mu_{k_0,\ldots,k_{m-1},k_m} \,
\mu_{l_0,\ldots,l_{m-1},k_m} \geq 0.
\end{multline}
\end{assumption}

\begin{example}
\label{ex:can}
\rm
Consider the particular case of Model~\ref{m:1} in which, 
for every $i\in\{0,\ldots,m\}$, $N_i\in\NN\smallsetminus\{0\}$,
$\HH_i=\RR^{N_i}$, $E_i$ is the canonical basis of $\RR^{N_i}$
and, for every $k\in\{1,\ldots,N_i\}$,
$\chi_{i,k}\in\{-1,1\}$ with the additional condition that,
for every $l\in\{1,\ldots,N_0\}$, $\chi_{0,k}=\chi_{0,l}$.
Further, for every 
$i\in\{1,\ldots,m\}$, the matrix 
$W_i=[w_{i,k,l}]_{1\leq k \leq N_i,1\leq l \leq N_{i-1}}
\in\RR^{N_i\times N_{i-1}}$ satisfies
\begin{equation}
(\forall k\in\{1,\ldots,N_i\})
(\forall l\in\{1,\ldots,N_{i-1}\})\quad
w_{i,k,l}=\chi_{i,k}\chi_{i-1,l} |w_{i,k,l}|.
\end{equation}
Then Assumption~\ref{as:netpos} holds. This is true in particular
if, for every $i\in\{1,\ldots,m\}$,\linebreak
%\begin{equation}
%$(\forall i\in\{1,\ldots,m\})$
%\quad 
$\{w_{i,k,l}\}_{1\leq k\leq N_i,1\leq l\leq N_{i-1}}\subset\RP$,
%\end{equation}
which corresponds to positively weighted networks. 
See \cite{Chor15} for the design of such networks.
\end{example}

In the following result, 
a Lipschitz constant of the network \eqref{e:defTi} coincides
with that of the linear network $W_m\circ\cdots\circ W_1$ for
standard choices of norms.

\begin{proposition}
\label{p:sepbispos}
Suppose that the assumptions of 
Corollary~\ref{c:sepbis} are satisfied, that 
\begin{equation}
\label{e:symnorm0}
\big(\forall(\xi_{k})_{k\in \bK_0}\in\ell^2(\bK_0)\big)
\quad\Bigg\|\sum_{k\in \bK_0} \xi_{k}e_{0,k}\Bigg\|_{\GG_0} =
\Bigg\|\sum_{k\in \bK_0} |\xi_{k}| e_{0,k}\Bigg\|_{\GG_0},
\end{equation}
and that Assumption~\ref{as:netpos} holds. Then the Lipschitz
constant $\vartheta_m$ of $T\colon\GG_0\to\GG_m$ in \eqref{e:dl231}
reduces to 
$\vartheta_m=\|W_m\circ\cdots\circ W_1\|_{\GG_0,\GG_m}$.
\end{proposition}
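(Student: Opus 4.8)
The plan is to establish the two inequalities $\vartheta_m\ge\|W_m\circ\cdots\circ W_1\|_{\GG_0,\GG_m}$ and $\vartheta_m\le\|W_m\circ\cdots\circ W_1\|_{\GG_0,\GG_m}$ separately. The first is immediate: since $\alpha_i\in[0,1]$ gives $1-2\alpha_i\in[-1,1]$, the value $1$ lies in $\{1-2\alpha_i,1\}$ for every $i$, so the choice $\Lambda_i=\Id$ is admissible in the supremum \eqref{e:dl231} and produces exactly $\|W_m\circ\cdots\circ W_1\|_{\GG_0,\GG_m}$. Thus all the content lies in the reverse inequality, for which I would show that no admissible choice of the diagonal operators $(\Lambda_i)_{1\le i\le m-1}$ can exceed the operator norm of the purely linear network.

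First I would fix an arbitrary input $x=\sum_{k\in\bK_0}\xi_{k}e_{0,k}\in\GG_0$ and arbitrary admissible operators $\Lambda_i\in\mathscr{D}_{\{1-2\alpha_i,1\}}(E_i)$, writing $\delta_{i,k}$ for the $k$-th diagonal entry of $\Lambda_i$, so that $|\delta_{i,k}|\le 1$. Expanding the composition coordinate by coordinate in the orthonormal bases, the $k_m$-th coordinate of $W_m\circ\Lambda_{m-1}\circ\cdots\circ\Lambda_1\circ W_1\,x$ is
\[
y_{k_m}=\sum_{k_0,\ldots,k_{m-1}}\mu_{k_0,\ldots,k_m}\Bigg(\prod_{i=1}^{m-1}\delta_{i,k_i}\Bigg)\xi_{k_0},
\]
with $\mu_{k_0,\ldots,k_m}$ as in \eqref{e:elt}; the same expansion with all $\delta$'s equal to $1$ and each $\xi_{k_0}$ replaced by $|\xi_{k_0}|$ gives the $k_m$-th coordinate $\widetilde y_{k_m}$ of $W_m\circ\cdots\circ W_1\,\widetilde x$, where $\widetilde x=\sum_{k\in\bK_0}|\xi_k|e_{0,k}$.

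The \emph{decisive} step is to use Assumption~\ref{as:netpos} to turn a triangle inequality into an equality for $\widetilde y$. Condition \eqref{e:condpos} says that, for each fixed output index $k_m$, all the coefficients $\mu_{k_0,\ldots,k_{m-1},k_m}$ have pairwise nonnegative products, hence there exists $\epsilon_{k_m}\in\{-1,1\}$ with $\epsilon_{k_m}\mu_{k_0,\ldots,k_m}=|\mu_{k_0,\ldots,k_m}|$ for all the remaining indices. Consequently $|\widetilde y_{k_m}|=\sum_{k_0,\ldots,k_{m-1}}|\mu_{k_0,\ldots,k_m}|\,|\xi_{k_0}|$, whereas the triangle inequality together with $|\prod_{i}\delta_{i,k_i}|\le 1$ yields $|y_{k_m}|\le\sum_{k_0,\ldots,k_{m-1}}|\mu_{k_0,\ldots,k_m}|\,|\xi_{k_0}|$. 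Hence $|y_{k_m}|\le|\widetilde y_{k_m}|$ for every $k_m$. Because the norm \eqref{e:normlp} on $\GG_m$ is a nondecreasing function of the moduli $(|\scal{\cdot}{e_{m,k}}|)_{k\in\bK_m}$ (the weights $\omega_k$ being strictly positive), this gives $\|W_m\circ\Lambda_{m-1}\circ\cdots\circ\Lambda_1\circ W_1\,x\|_{\GG_m}\le\|W_m\circ\cdots\circ W_1\,\widetilde x\|_{\GG_m}$.

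Finally I would invoke the absoluteness hypothesis \eqref{e:symnorm0}, which gives $\|\widetilde x\|_{\GG_0}=\|x\|_{\GG_0}$, to conclude
\[
\frac{\|W_m\circ\Lambda_{m-1}\circ\cdots\circ\Lambda_1\circ W_1\,x\|_{\GG_m}}{\|x\|_{\GG_0}}\le\frac{\|W_m\circ\cdots\circ W_1\,\widetilde x\|_{\GG_m}}{\|\widetilde x\|_{\GG_0}}\le\|W_m\circ\cdots\circ W_1\|_{\GG_0,\GG_m}.
\]
Taking the supremum over $x\ne 0$ and over all admissible $(\Lambda_i)$ then yields $\vartheta_m\le\|W_m\circ\cdots\circ W_1\|_{\GG_0,\GG_m}$, which combined with the reverse inequality gives equality. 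I expect the main obstacle to be the sign-cancellation bookkeeping at the passage $|y_{k_m}|\le|\widetilde y_{k_m}|$: this is precisely where \eqref{e:condpos} is used, since without a common sign per output coordinate the cross terms could partially cancel and the unperturbed linear network on $\widetilde x$ would no longer dominate. Some care is also needed to justify the rearrangements of the (possibly infinite) multi-index sums, which is licit because the operators involved are bounded and the basis expansions converge unconditionally.
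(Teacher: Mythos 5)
Your proposal is correct and follows essentially the same route as the paper's proof: expand the composition in the bases via the coefficients $\mu_{k_0,\ldots,k_m}$ of \eqref{e:elt}, use the sign coherence per output index from \eqref{e:condpos} together with $|\delta_{i,k_i}|\leq 1$ to dominate each output coordinate by the corresponding coordinate of $W_m\circ\cdots\circ W_1$ applied to $\widetilde x=\sum_{k}|\xi_k|e_{0,k}$, and invoke \eqref{e:symnorm0} to conclude. The only cosmetic difference is that the paper absorbs the signs by precomposing with a diagonal $\pm 1$ isometry $\Lambda_m$ so as to assume all $\mu_{k_0,\ldots,k_m}\geq 0$, whereas you extract a common sign $\epsilon_{k_m}$ per output coordinate directly; the two devices are equivalent.
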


We show below that the Lipschitz constant of a positively weighted
network associated with weight operators $(W_i)_{1\leq i\leq m}$ 
and nonseparable activation operators is not necessarily
$\|W_m\circ\cdots\circ W_1\|$.

\begin{example}
\label{ex:tanh}
\rm
Consider the toy version of Model~\ref{m:1} in which $m=2$, 
$\HH_0=\HH_1=\HH_2=\RR^2$. Set
$\varphi\colon x=(\xi_1,\xi_2)\mapsto\phi(\xi_1)+ \phi(\xi_2)$,
where 
\begin{align}
\phi\colon\RR\to\RX\colon
\xi\mapsto
\begin{cases}
\dfrac{(1+\xi)\ln(1+\xi)+(1-\xi)\ln(1-\xi)-\xi^2}{2}
&\text{if}\;\;|\xi|<1;\\
\ln(2)-1/2&\text{if}\;\;|\xi|=1;\\
\pinf,&\text{if}\;\;|\xi|>1.
\end{cases}
\end{align}
Let $\xi\in\left]-1,1\right[=\dom\phi'=\dom(\Id+\phi')
=\ran\prox_\phi$. Then $\xi+\phi'(\xi)=\text{arctanh}(\xi)$ and 
therefore $\varrho=(\Id+\phi')^{-1}=\text{tanh}$. Consequently,
we derive from \cite[Example~2.13]{2019} that 
$(\forall x=(\xi_1,\xi_2)\in\RR^2)$
$\prox_{\varphi}x=\big(\text{tanh}(\xi_1),\text{tanh}(\xi_2)\big)$.
Now set
\begin{equation}
b_1=b_2=0,\quad
U=\frac12 
\begin{bmatrix}
\sqrt{3} & 1\\
1 & -\sqrt{3}
\end{bmatrix},\quad
W_1=
\begin{bmatrix}
1 & 3\\
3 & 3
\end{bmatrix},
\quad
W_2=\begin{bmatrix}
10 & 2\\
7 & 4
\end{bmatrix},
\end{equation}
$R_1=\prox_{\varphi\circ U}=U\circ\prox_{\varphi}\circ U$
\cite[Lemma~2.8]{Smms05}, and
$R_2=\Id$. Then $\|W_2W_1\|\approx 54.72$.
If the input $x= (-3.4,2)$ is perturbed by 
$z=10^{-4}\times(1,\sqrt{3})$, we get
${\|T(x+z)-Tx\|}/{\|z\|}\approx 58.18$,
which shows that, although $W_1$ and $W_2$ have strictly positive
entries, the Lipschitz constant is larger than $\|W_2W_1\|$.
Note that, in this scenario, the constant of \eqref{e:defthetam} is 
\begin{equation}
\theta_2=(\|W_2W_1\|+\|W_2\|\|W_1\|)/2\approx 60.50. 
\end{equation}
A sharper
Lipschitz constant can be obtained by noticing that this
network is equivalent to a network
in which $W_1$, $W_2$, and $R_1$ are replaced
by $W_1'=U W_1$, $W_2'= W_2U$, and
$R_1'=\prox_{\varphi}$. Since $R_1'$ is separable, the constant
of \eqref{e:thetambinvar} is $\vartheta_2\approx 59.54$. 
In contrast, the 
naive bound of \eqref{e:trivubound} 
is about $66.29$.
\end{example}

For separable activators in finite-dimensional spaces, we have the 
following result, which does not require Assumption~\ref{as:netpos}.

\begin{proposition}
\label{p:sepbisposb}
Consider the setting of Model~\ref{m:1} with $m\geq 2$.
Suppose that the assumptions of Corollary~\ref{c:sepbis}  hold
and that $\|\cdot\|_{\GG_0}$ satisfies \eqref{e:symnorm0}.
In addition, assume that, for every $i\in\{0,\ldots,m\}$, 
$\HH_i=\RR^{N_i}$ and $E_i$ is the canonical basis of $\RR^{N_i}$.
For every $i\in\{1,\ldots,m\}$, let $A_i$ denote the
matrix obtained by taking the absolute values of the
entries of the matrix $W_i$. Then the Lipschitz constant 
$\vartheta_m$ of $T\colon\GG_0\to\GG_m$ in \eqref{e:dl231} satisfies 
$\vartheta_m\leq\|A_m\cdots A_1\|_{\GG_0,\GG_m}$.
\end{proposition}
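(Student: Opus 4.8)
The plan is to fix an arbitrary admissible choice of the diagonal operators $\Lambda_1,\ldots,\Lambda_{m-1}$ over which the supremum in \eqref{e:dl231} is taken, to bound the operator norm $\|W_m\circ\Lambda_{m-1}\circ\cdots\circ\Lambda_1\circ W_1\|_{\GG_0,\GG_m}$ by $\|A_m\cdots A_1\|_{\GG_0,\GG_m}$ uniformly in this choice, and then to pass to the supremum. The crucial structural fact is that, for every $i\in\{1,\ldots,m-1\}$, each diagonal entry of $\Lambda_i$ lies in $\{1-2\alpha_i,1\}$; since $\alpha_i\in[0,1]$, we have $1-2\alpha_i\in[-1,1]$, so every such entry has modulus at most $1$. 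This is the only place where the averagedness constants enter, and it is what drives the whole estimate.

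First I would pass to entrywise bounds. Writing $M=W_m\Lambda_{m-1}\cdots\Lambda_1 W_1$ and expanding the product in the canonical bases, the $(k_m,k_0)$ entry of $M$ is a sum over the intermediate indices of products of entries of the $W_i$ weighted by the diagonal entries of the $\Lambda_i$. Applying the triangle inequality together with the bound $|(\Lambda_i)_{k,k}|\leq 1$ established above, each such entry is dominated in modulus by the corresponding entry of the nonnegative matrix $A_m\cdots A_1$; that is, $|M_{k_m,k_0}|\leq(A_m\cdots A_1)_{k_m,k_0}$. Consequently, for every $x\in\GG_0$, the vector $Mx$ satisfies $|\scal{Mx}{e_{m,k_m}}|\leq\big(A_m\cdots A_1\,|x|\big)_{k_m}$ coordinatewise, where $|x|$ denotes the vector whose $k_0$th coordinate is $|\scal{x}{e_{0,k_0}}|$.

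Next I would invoke the two norm hypotheses. The norm $\|\cdot\|_{\GG_m}$ of \eqref{e:normlp} is a weighted $\ell^p$ norm, hence depends monotonically on the moduli of the coordinates: if a nonnegative coordinate vector dominates another entrywise, its norm is at least as large. Combining this with the coordinatewise domination of the previous step yields $\|Mx\|_{\GG_m}\leq\|A_m\cdots A_1\,|x|\|_{\GG_m}\leq\|A_m\cdots A_1\|_{\GG_0,\GG_m}\,\||x|\|_{\GG_0}$, the last step being the definition of the operator norm applied to $|x|\in\GG_0$. Since $\|\cdot\|_{\GG_0}$ satisfies \eqref{e:symnorm0}, we have $\||x|\|_{\GG_0}=\|x\|_{\GG_0}$, so $\|Mx\|_{\GG_m}\leq\|A_m\cdots A_1\|_{\GG_0,\GG_m}\,\|x\|_{\GG_0}$. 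Taking the supremum over $x$ in the unit ball of $\GG_0$ bounds $\|M\|_{\GG_0,\GG_m}$ by $\|A_m\cdots A_1\|_{\GG_0,\GG_m}$; as this bound is independent of $\Lambda_1,\ldots,\Lambda_{m-1}$, passing to the supremum over these operators delivers $\vartheta_m\leq\|A_m\cdots A_1\|_{\GG_0,\GG_m}$.

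The argument is elementary once the entrywise domination is in place, and I expect no genuine obstacle. The only point requiring care is the interplay between the two norms, namely that both are \emph{absolute} in the appropriate sense: monotonicity of $\|\cdot\|_{\GG_m}$ under coordinatewise domination of nonnegative vectors, and invariance of $\|\cdot\|_{\GG_0}$ under taking coordinatewise moduli. The former is immediate from the explicit formula \eqref{e:normlp}, while the latter is exactly the hypothesis \eqref{e:symnorm0}, so the entire content of the result reduces to the uniform entrywise bound $|M_{k_m,k_0}|\leq(A_m\cdots A_1)_{k_m,k_0}$ coming from $|1-2\alpha_i|\leq 1$.
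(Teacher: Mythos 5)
Your proposal is correct and follows essentially the same route as the paper: the paper's proof also reduces to the coordinatewise bound $|\scal{W_m\Lambda_{m-1}\cdots\Lambda_1W_1x}{e_{m,k_m}}|\leq\scal{(A_m\cdots A_1)y}{e_{m,k_m}}$ with $y$ the vector of moduli of the coordinates of $x$ (your entrywise domination $|M_{k_m,k_0}|\leq(A_m\cdots A_1)_{k_m,k_0}$ is the same estimate, written with the path products $\mu_{k_0,\ldots,k_m}$ instead of matrix entries), and then invokes monotonicity of $\|\cdot\|_{\GG_m}$ and \eqref{e:symnorm0} exactly as you do. The only cosmetic difference is that the paper picks a norming vector $x$ attaining the operator norm, whereas you bound $\|Mx\|_{\GG_m}$ for all $x$ and pass to the supremum, which is equally valid.
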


\section{Conclusion}
Using advanced tools from nonlinear analysis, we have derived sharp
Lipschitz constants for layered network structures involving
compositions of nonexpansive averaged operators and affine
operators. This framework has been shown to model feed-forward
neural networks having a chain graph structure. Extending these
results to networks having a more general dyadic acyclic graph
(DAG) structure would be of interest. Among the many avenues of
future research that this work suggests, it would be interesting to
exploit it to devise training strategies to achieve better
robustness. The proposed nonexpansive operator machinery could also
be used to design network architectures with smaller Lipschitz
constants. Finally, computing local Lipschitz constants could be of
interest in practice and constitutes an important topic
of future research.
\appendix

\section{Technical lemmas}

\begin{lemma}{\rm\cite[Proposition~2.4]{Siop07}}
\label{l:prox24}
Let $R$ be a function defined from $\RR$ to $\RR$. Then $R$ is 
the proximity operator of a function in $\Gamma_0(\RR)$ if 
and only if it is nonexpansive and increasing. 
\end{lemma}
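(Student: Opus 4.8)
The plan is to prove both implications by realizing $R$ through the identity $\prox_\phi=J_{\partial\phi}=(\Id+\partial\phi)^{-1}$ and the characterization of resolvents of maximally monotone operators on the line.

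For the forward implication, suppose $R=\prox_\phi$ with $\phi\in\Gamma_0(\RR)$. Then $R$ is firmly nonexpansive, hence nonexpansive. To obtain monotonicity I would use firm nonexpansiveness in its one-dimensional form: for all $x,y\in\RR$, $(x-y)(Rx-Ry)\geq(Rx-Ry)^2\geq 0$. If $x>y$ and we had $Rx<Ry$, the left-hand side would be strictly negative while the right-hand side is nonnegative, a contradiction; hence $Rx\geq Ry$ and $R$ is increasing.

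For the converse, which is the substantive direction, suppose $R$ is nonexpansive and increasing. The idea is to exhibit $R$ as a resolvent and then invoke that, on $\RR$, every maximally monotone operator is a subdifferential. Concretely I would set $A=R^{-1}-\Id$ as a set-valued operator, so that $\gra A=\{(Ru,\,u-Ru):u\in\RR\}$, and verify three points. First, $A$ is monotone: for parameters $u\geq v$ one has $Ru-Rv\geq 0$ since $R$ is increasing, and $(u-Ru)-(v-Rv)=(u-v)-(Ru-Rv)\geq 0$ since nonexpansiveness gives $Ru-Rv\leq u-v$; thus both coordinate increments along $\gra A$ are nonnegative and their product is $\geq 0$. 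Second, $A$ is maximally monotone by Minty's criterion: since $\Id+A=R^{-1}$, we get $\ran(\Id+A)=\dom R=\RR$. Consequently $J_A=(\Id+A)^{-1}=R$.

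It remains to upgrade $A$ from maximally monotone to a subdifferential, which I expect to be the main obstacle. In general dimension this step needs cyclic monotonicity together with Rockafellar's characterization, but here the one-dimensional setting is decisive: on $\RR$ every monotone operator is automatically cyclically monotone, so a maximally monotone $A$ is the subdifferential of the convex function obtained by integrating its nondecreasing graph. Granting $A=\partial\phi$ for some $\phi\in\Gamma_0(\RR)$, we conclude $R=J_A=J_{\partial\phi}=\prox_\phi$. In a sketch I would either cite this one-dimensional integration result directly or construct $\phi$ explicitly as a primitive of a monotone selection of $A$, extended by lower semicontinuity and set to $\pinf$ off the closure of its domain.
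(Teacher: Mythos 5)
The paper itself gives no proof of this lemma---it is imported verbatim from \cite[Proposition~2.4]{Siop07}---and your argument is a correct, self-contained reconstruction of the standard proof of that result: firm nonexpansiveness of $\prox_\phi$ read off in one dimension for the forward direction, and, for the converse, the identification $R=J_A$ with $A=R^{-1}-\Id$ monotone with $\ran(\Id+A)=\RR$, followed by the fact that every maximally monotone operator on $\RR$ is a subdifferential (this is \cite[Cor.~22.23]{Livre1}, the very fact the paper invokes in its proof of Proposition~\ref{p:23}\ref{p:23ii}, so you may cite it rather than re-derive it via cyclic monotonicity). All steps check out, including the monotonicity and Minty surjectivity verifications.
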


\begin{lemma}
\label{l:sup}
Let $q\in\NN\smallsetminus\{0\}$ and, for every 
$i\in\{1,\ldots,q\}$, 
let $S_i$ be a nonempty subset of a real vector space $\XX_i$. 
Let $\psi\colon\XX_1\times\cdots\times\XX_q\to\RR$ be a
function which is convex with respect to each of its $q$
coordinates. Set $\boldsymbol{S}=S_1\times\cdots\times S_q$ and let
$\conv\boldsymbol{S}$ be its convex envelope. Then
$\sup\psi(\boldsymbol{S})=\sup\psi(\conv\boldsymbol{S})$.  
\end{lemma}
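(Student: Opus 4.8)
The plan is to prove that $\sup\psi(\boldsymbol{S})=\sup\psi(\conv\boldsymbol{S})$ by establishing the two inequalities separately. Since $\boldsymbol{S}\subset\conv\boldsymbol{S}$, the inequality $\sup\psi(\boldsymbol{S})\leq\sup\psi(\conv\boldsymbol{S})$ is immediate, so the substantive content lies in the reverse inequality $\sup\psi(\conv\boldsymbol{S})\leq\sup\psi(\boldsymbol{S})$. The key observation is that convexity in each coordinate separately (rather than joint convexity) is exactly the hypothesis needed to lift a coordinatewise supremum argument through the product structure of $\boldsymbol{S}$.

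The main idea is to prove a coordinatewise reduction: for a function convex in a single variable, the supremum over a set equals the supremum over its convex hull. Precisely, I would first record the elementary one-variable fact that if $g\colon\XX\to\RR$ is convex and $S\subset\XX$ is nonempty, then $\sup g(\conv S)=\sup g(S)$; indeed, any point of $\conv S$ is a finite convex combination $\sum_j\lambda_j s_j$ with $s_j\in S$, and convexity gives $g\big(\sum_j\lambda_j s_j\big)\leq\sum_j\lambda_j g(s_j)\leq\sup g(S)$, whence $\sup g(\conv S)\leq\sup g(S)$, and the reverse is trivial. The plan is then to apply this one coordinate at a time. Note that $\conv\boldsymbol{S}=\conv(S_1\times\cdots\times S_q)$, and I would show it suffices to replace each factor $S_i$ by $\conv S_i$ in succession, using that $\conv(S_1\times\cdots\times S_q)\subset\conv S_1\times\cdots\times\conv S_q$.

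Concretely, I would argue by a telescoping sequence of inequalities. Fixing all but the $i$th coordinate and viewing $\psi$ as a convex function of its $i$th argument, the one-variable lemma lets me enlarge $S_i$ to $\conv S_i$ without increasing the supremum. Iterating over $i=1,\ldots,q$ yields
\begin{equation}
\sup\psi(S_1\times\cdots\times S_q)
=\sup\psi(\conv S_1\times\cdots\times\conv S_q).
\end{equation}
Since $\conv\boldsymbol{S}\subset\conv S_1\times\cdots\times\conv S_q$, taking the supremum over the smaller set $\conv\boldsymbol{S}$ gives $\sup\psi(\conv\boldsymbol{S})\leq\sup\psi(\conv S_1\times\cdots\times\conv S_q)=\sup\psi(\boldsymbol{S})$, which is the desired reverse inequality.

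The step I expect to require the most care is the iterated coordinatewise enlargement, since at each stage I am applying the one-variable result with the other coordinates held at arbitrary points of the already-enlarged factors, and I must take the supremum over those fixed coordinates uniformly. The clean way to handle this is to establish at each step an equality of suprema over the full product (not merely a pointwise bound), so that passing to the supremum over the remaining coordinates is automatic and no measurability or attainment issues arise; the functions take values in $\RR$, so the suprema are well defined as extended reals and the argument goes through even when they are infinite.
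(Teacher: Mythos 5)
Your proposal is correct. It differs from the paper's proof in its organization: the paper argues pointwise, taking a single $\boldsymbol{x}=\sum_{j\in I}\alpha_j\boldsymbol{x}_j\in\conv\boldsymbol{S}$ and telescoping the convexity inequality through the $q$ coordinates with the same coefficient family, which produces a convex combination over the multi-index set $I^q$ of values $\psi(x_{j_1,1},\ldots,x_{j_q,q})$, each evaluated at a point of $\boldsymbol{S}$ thanks to the product structure; this bounds $\psi(\boldsymbol{x})$ by $\sup\psi(\boldsymbol{S})$ directly. You instead factor the argument through a one-variable lemma ($\sup g(\conv S)=\sup g(S)$ for $g$ convex) and an iterated enlargement of the factors, arriving at the equality $\sup\psi(\boldsymbol{S})=\sup\psi(\conv S_1\times\cdots\times\conv S_q)$, and then conclude via the inclusion $\conv\boldsymbol{S}\subset\conv S_1\times\cdots\times\conv S_q$. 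Both routes rest on the same two ingredients (separate convexity and the fact that mixing coordinates of points of $\boldsymbol{S}$ stays in $\boldsymbol{S}$), but yours is more modular, avoids the multi-index bookkeeping, and in fact yields the slightly stronger conclusion that the supremum is unchanged even over the larger set $\prod_i\conv S_i$; the paper's version is more direct and self-contained. You correctly identify and handle the one delicate point, namely that each coordinatewise enlargement must be stated as an equality of suprema over the full product so that the iteration is legitimate, and your remark that everything holds in the extended reals disposes of the case of infinite suprema.
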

\begin{proof}
Set $\mu=\sup\psi(\boldsymbol{S})$. Clearly,
$\mu\leq\sup\psi(\conv\boldsymbol{S})$. 
Now take $\boldsymbol{x}\in\conv\boldsymbol{S}$. Then
$\boldsymbol{x}=\sum_{j\in I}\alpha_j\boldsymbol{x}_j$, where
$(\alpha_j)_{j\in I}$ is a finite family in $\rzeroun$ such that
$\sum_{j\in I}\alpha_j=1$ and, for every $j\in I$, 
$\boldsymbol{x}_j=(x_{j,i})_{1\leq i\leq q}$, with
$(\forall i\in\{1,\ldots,q\})$ $x_{j,i}\in S_i$. Note that
$(\forall(j_1,\ldots,j_q)\in I^q)$ 
$(x_{j_1,1},\ldots,x_{j_q,q})\in\boldsymbol{S}$. Therefore, 
\begin{align}
\psi(\boldsymbol{x})
&=\psi\bigg(\sum_{j_1\in I}\alpha_{j_1}x_{j_1,1},\ldots,
\sum_{j_q\in I}\alpha_{j_q}x_{j_q,q}\bigg)\nonumber\\
&\leq\sum_{j_1\in I}\alpha_{j_1}\psi\bigg(x_{j_1,1},
\sum_{j_2\in I}\alpha_{j_2}x_{j_2,2},\ldots,
\sum_{j_q\in I}\alpha_{j_q}x_{j_q,q}\bigg)\nonumber\\
&\;\;\vdots\nonumber\\[-2mm]
&\leq\sum_{j_1\in I,\ldots,j_q\in I}\bigg(\prod_{i=1}^q
\alpha_{j_i}\bigg)\psi\big(x_{j_1,1},
\ldots,x_{j_q,q}\big)
%\nonumber\\
%&
\leq\mu.
\end{align}
Hence, $\sup\psi(\conv\boldsymbol{S})=
\sup_{\boldsymbol{x}\in\conv\boldsymbol{S}}\psi(\boldsymbol{x})
\leq\mu$.
\end{proof}

\begin{lemma}
\label{l:alphascal}
Let $\HH$ be a separable real Hilbert space, let
$\emp\neq\bK\subset\NN$, let $E=(e_k)_{k\in\bK}$ be an
orthonormal basis of $\HH$, and let $\alpha\in [0,1]$.
For every $k\in\bK$, let $\varrho_k\colon\RR\to\RR$
be $\alpha$-averaged and such that $\varrho_k (0)=0$. Define
$R\colon\HH\to\HH\colon
x\mapsto\sum_{k\in\bK}\varrho_k(\scal{x}{e_k})e_k$, and fix
$x$ and $y$ in $\HH$. Then there exists 
$\Lambda\in\mathscr{D}_{[1-2\alpha,1]}({E})$ such that 
$Rx-Ry=\Lambda (x-y)$.
\end{lemma}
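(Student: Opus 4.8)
The plan is to construct $\Lambda$ diagonally in the basis $E$, reading off each diagonal entry as a difference quotient of the corresponding scalar activator. For each $k\in\bK$ write $\xi_k=\scal{x}{e_k}$ and $\eta_k=\scal{y}{e_k}$, and set
\[
\lambda_k=
\begin{cases}
\dfrac{\varrho_k(\xi_k)-\varrho_k(\eta_k)}{\xi_k-\eta_k},&\text{if}\;\;\xi_k\neq\eta_k;\\
1,&\text{if}\;\;\xi_k=\eta_k.
\end{cases}
\]
With this choice the scalar identity $\varrho_k(\xi_k)-\varrho_k(\eta_k)=\lambda_k(\xi_k-\eta_k)$ holds in every coordinate (the degenerate case being trivial, since both sides vanish). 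Hence, once the operator $\Lambda\colon z\mapsto\sum_{k\in\bK}\lambda_k\scal{z}{e_k}e_k$ is shown to be well defined, Parseval's identity will give $Rx-Ry=\sum_{k\in\bK}\lambda_k\scal{x-y}{e_k}e_k=\Lambda(x-y)$, which is the claim. Thus everything reduces to verifying that each $\lambda_k$ lies in $[1-2\alpha,1]$, for then $\Lambda\in\mathscr{D}_{[1-2\alpha,1]}(E)$.

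The crux is this bound on the difference quotients, and it is the only place where the $\alpha$-averagedness hypothesis is genuinely used. The upper bound $\lambda_k\leq 1$ is immediate, since $\varrho_k$ is in particular nonexpansive, so $|\lambda_k|\leq 1$. For the lower bound I would invoke averagedness directly: writing $\varrho_k=(1-\alpha)\Id+\alpha Q_k$ with $Q_k\colon\RR\to\RR$ nonexpansive, a one-line computation gives, for $\xi_k\neq\eta_k$ and $\alpha>0$,
\[
Q_k(\xi_k)-Q_k(\eta_k)=\frac{\alpha+\lambda_k-1}{\alpha}\,(\xi_k-\eta_k).
\]
Nonexpansiveness of $Q_k$ then forces $|\alpha+\lambda_k-1|\leq\alpha$, that is, $1-2\alpha\leq\lambda_k\leq 1$. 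The case $\alpha=0$ is handled separately: then $\varrho_k=\Id$, so $\lambda_k=1$, consistent with $[1-2\alpha,1]=\{1\}$. I expect this lower-bound step to be the main obstacle, in the modest sense that it is where the structure of the problem is exploited; the rest is bookkeeping.

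It remains to assemble the pieces and check well-definedness. First I would record that $R$ is well defined on $\HH$: since $\varrho_k(0)=0$ and $\varrho_k$ is nonexpansive, $|\varrho_k(\scal{x}{e_k})|\leq|\scal{x}{e_k}|$, whence $\sum_{k\in\bK}|\varrho_k(\scal{x}{e_k})|^2\leq\|x\|^2<\pinf$, so the defining series converges in $\HH$ (and likewise with $x$ replaced by $y$). Second, since $[1-2\alpha,1]\subset[-1,1]$ for $\alpha\in\czeroun$, the coefficients $(\lambda_k)_{k\in\bK}$ are bounded, so $\Lambda$ is a bounded (indeed nonexpansive) diagonal operator and therefore well defined, lying in $\mathscr{D}_{[1-2\alpha,1]}(E)$. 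Combining these facts with the coordinatewise identity established above yields $Rx-Ry=\Lambda(x-y)$, completing the argument.
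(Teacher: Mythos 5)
Your proposal is correct and follows essentially the same route as the paper: both decompose each $\varrho_k=(1-\alpha)\Id+\alpha Q_k$ with $Q_k$ nonexpansive and deduce that the coordinatewise difference quotient lies in $[1-2\alpha,1]$, then assemble the diagonal operator $\Lambda$. You merely make explicit the bound on $\lambda_k$ that the paper asserts without computation, which is a welcome (and correct) elaboration.
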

\begin{proof}
We saw in Example~\ref{ex:93} that $R$ is well defined.
We have 
\begin{equation}
\label{e:RxRy}
Rx-Ry=\sum_{k\in\bK}\big(\varrho_k(\scal{x}{e_k})-
\varrho_k(\scal{y}{e_k})\big)e_k.
\end{equation}
For every $k\in\bK$, there exists a 
nonexpansive $\theta_k\colon\RR\to\RR$ such that
$\varrho_k=(1-\alpha)\Id+\alpha\theta_k$ and, therefore,
\begin{equation}
\varrho_k(\scal{x}{e_k})-\varrho_k(\scal{y}{e_k})
=(1-\alpha)(\scal{x}{e_k}-\scal{y}{e_k})
+\alpha\big(\theta_k(\scal{x}{e_k})-\theta_k(\scal{y}{e_k})\big).
\end{equation}
Consequently, for every $k\in\bK$, 
there exists $\lambda_k\in [1-2\alpha,1]$ such that
\begin{equation}
(1-\alpha)\big(\scal{x}{e_k}-\scal{y}{e_k}\big)
+\alpha\big(\theta_k(\scal{x}{e_k})-\theta_k(\scal{y}{e_k})\big)
=\lambda_k(\scal{x}{e_k}-\scal{y}{e_k}).
\end{equation}
We deduce from \eqref{e:RxRy} that
$Rx-Ry=\sum_{k\in\bK}\lambda_k (\scal{x}{e_k}-\scal{y}{e_k})e_k$,
as claimed.
\end{proof}

\section{Proofs of main results}
\subsection{Proof of Proposition~\ref{p:23}}
\ref{p:23i}:
As seen in \eqref{e:averaged}, there exists a nonexpansive operator
$Q\colon\HH\to\HH$ such that $R=(1-\alpha)\Id+\alpha Q$. However, 
by \cite[Prop.~4.4 and Cor.~23.9]{Livre1}, there 
exists a maximally monotone operator
$A\colon\HH\to 2^{\HH}$ such that $Q=2J_A-\Id$.
Hence, $R=\Id+\lambda(J_A-\Id)$, where
$\lambda=2\alpha\in\left[0,2\right]$. For the last claim, 
notice that, since $J_A$ is firmly nonexpansive
\cite[Cor.~23.9]{Livre1}, so is 
$R=(1-\lambda)\Id+\lambda J_A$ as a convex combination of two
firmly nonexpansive operators \cite[Exa.~4.7]{Livre1}. 
\ref{p:23ii}$\Rightarrow$\ref{p:23i}: It follows from 
\cite[Cor.~22.23]{Livre1} that there exists 
$\phi\in\Gamma_0(\RR)$ such that
$A=\partial\phi$, which provides the expression for $R$. 
The increasingness claim follows from Lemma~\ref{l:prox24}.
Finally, if $\phi$ is even, then $\prox_\phi$ is odd
\cite[Prop.~24.10]{Livre1} and so is $R$. 
\ref{p:23iii}: This follows from Lemma~\ref{l:prox24}.

\subsection{Proof of Example~\ref{ex:404usa}}
Let $\sigma_C$ be the support function of $C$ and set 
$f=\sigma_C+\phi\circ\|\cdot\|\in\Gamma_0(\HH)$. Then it 
follows from \cite[Prop.~24.30]{Livre1} and \eqref{e:304usa} 
that $R=\Id+\lambda(\prox_f-\Id)$, However, since $\prox_f$ is 
firmly nonexpansive, it is $1/2$-averaged, which makes $R$
a $\lambda/2$-averaged operator. Now consider the function $\phi$ of
\eqref{e:eh8}. Then it is an even function in $\Gamma_0(\RR)$
with $0$ as its unique minimizer. Next, set 
$\psi=|\cdot|-\arctan|\cdot|$. As seen in 
Example~\ref{ex:23}\ref{ex:23ii}, 
$\phi=\mu\psi^*(\cdot/\mu)-|\cdot|^2/2$ and 
$\dom\psi^*$ is bounded. Therefore $\dom\phi=\mu\dom\psi^*$ 
is bounded. In turn, $\phi$ is supercoercive and we derive from 
\cite[Prop.~14.15]{Livre1} that $\dom\phi^*=\RR$.
Hence, since $\phi=\phi^{**}$ is strictly convex, it follows
derive from \cite[Prop.~18.9]{Livre1} that $\phi^*$ is 
differentiable on $\RR$. In addition, $d_C=\|\cdot\|$. Altogether, 
\eqref{e:304usa} reduces to 
\begin{equation}
(\forall x\in\HH)\quad
Rx=
\begin{cases}
\Frac{\prox_{\phi}\|x\|}{\|x\|}
x,&\text{if}\;\;x\neq 0;\\
0,&\text{if}\;\;x=0
\end{cases}
\end{equation}
and hence, in view of Example~\ref{ex:23}\ref{ex:23ii}, to 
\eqref{e:404usa}.

\subsection{Proof of Example~\ref{ex:93}}
Let $x\in\HH$ and $y\in\HH$.
It follows from the nonexpansiveness of the functions 
$(\varrho_k)_{k\in\bK}$ that
\begin{equation}
\sum_{k\in\bK}\big|\varrho_k(\scal{x}{e_k})\big|^2
=\sum_{k\in\bK}\big|\varrho_k(\scal{x}{e_k})-\varrho_k(0)\big|^2
\leq\sum_{k\in\bK}\big|\scal{x}{e_k}-0\big|^2=\|x\|^2.
\end{equation}
Hence, $R$ is well defined. For every $k\in\bK$, 
by \eqref{e:averaged} there exists a 
nonexpansive function $\theta_k\colon\RR\to\RR$ such that
$\varrho_k=(1-\alpha)\Id+\alpha\theta_k$. Hence, 
$Rx=(1-\alpha) x+\alpha Qx$, where
$Qx=\sum_{k\in\bK}\theta_k(\scal{x}{e_k})e_k$.
Therefore,
\begin{equation}
\|Qx-Qy\|^2=\sum_{k\in\bK}
\big|\theta_k(\scal{x}{e_k})-\theta_k(\scal{y}{e_k})\big|^2
\leq\sum_{k\in\bK}\big|\scal{x}{e_k}-\scal{y}{e_k}\big|^2
=\|x-y\|^2.
\end{equation}
This shows that $Q$ is nonexpansive and hence that
$R$ is $\alpha$-averaged.

\subsection{Proof of Example~\ref{ex:rt5}}
Let $S$ be the sorting operator of Example~\ref{ex:rt6}. Then
\begin{align}
(\forall x\in\RR^N)(\forall y\in\RR^N)\;\;\|Sx-Sy\|^2 
&=\|S x\|^2-2\scal{Sx}{Sy}+\|S y\|^2\nonumber\\
&=\|x\|^2-2\scal{Sx}{Sy}+\|y\|^2\nonumber\\
&\leq\|x\|^2-2\scal{x}{y}+\|y\|^2\label{e:ineqrea}\\
&=\|x-y\|^2,
\label{e:nU7}
\end{align}
where \eqref{e:ineqrea} follows from \cite[Thm.~368]{Hard52}.
This shows that $S$ is nonexpansive. Furthermore, 
$Q=2\proj_C-\Id$ is nonexpansive \cite[Cor.~4.18]{Livre1}. 
Note that
\begin{equation}
(1-\omega)\proj_C+\omega S 
=\bigg(1-\frac{1+\omega}{2}\bigg)\Id+\frac{1+\omega}{2}\left(
\frac{1-\omega}{1+\omega}Q+\frac{2\omega}{1+\omega}S\right).
\end{equation}
Since $((1-\omega)Q+2\omega S)/(1+\omega)$ is
nonexpansive as a convex combination of nonexpansive operators, the
operator $(1-\omega)\proj_C+\omega S$ is $(1+\omega)/2$-averaged. 

\subsection{Proof of Example~\ref{ex:rt6}}
Set $A=\operatorname{Diag}(\tau_1,\ldots,\tau_{N-1})$. 
Let $x$ and $y$ be in $\RR^{N-1}$, and define
$\widetilde{x}=[(Ax)^\top,\theta]^\top$ and
$\widetilde{y}=[(Ay)^\top,\theta]^\top$. As seen in 
\eqref{e:nU7}, $S$ is nonexpansive. Consequently, 
\begin{align}
\|Rx-Ry\| 
&=\|US \widetilde{x}-US \widetilde{y}\|
\leq \|U\|\,\|S \widetilde{x}-S\widetilde{y}\|
=\|S\widetilde{x}-S\widetilde{y}\|\nonumber\\
&\leq\|\widetilde{x}-\widetilde{y}\|
=\|Ax-Ay\|
\leq\max\{|\tau_1|,\ldots,|\tau_{N-1}|\} \|x-y\|.
\end{align}
This shows that $R$ is Lipschitzian with constant
$\max\{|\tau_1|,\ldots,|\tau_{N-1}|\}<1$. It is thus 
$\alpha$-averaged with 
$\alpha=(1+\max\{|\tau_1|,\ldots,|\tau_{N-1}|\})/2$ 
\cite[Prop.~4.38]{Livre1}.

\subsection{Proof of Theorem~\ref{t:propLipgen}}
For every $i\in\{1,\ldots,m\}$,
$P_i=R_i(\cdot+b_i)$ is $\alpha_i$-averaged
%Indeed if $N_i=\Id+(R_i-\Id)/\alpha_i$ is nonexpansive so is
%$(1-\alpha_i)b_i/\alpha_i+N_i(\cdot+b_i)$.
and, therefore, there exists 
a nonexpansive operator $Q_i\colon\HH_i\to\HH_i$ such that
$P_i=(1-\alpha_i)\Id+\alpha_iQ_i$.
Since $T=P_m\circ W_m\circ\cdots\circ P_1\circ W_1$ and $P_m$ is
nonexpansive, it suffices to show that 
\begin{equation}
\label{e:recurrence}
\theta_m\;\text{is a Lipschitz constant of}\;
W_m\circ\cdots\circ P_1\circ W_1. 
\end{equation}
Let us prove this result by induction. Let $x\in\HH_0$ and
$y\in\HH_0$. If $m=2$, we derive from the
nonexpansiveness of $Q_1$ that
\begin{align}
\label{e:m=2}
&\hskip -8mm
\|(W_2\circ P_1\circ W_1)x-(W_2\circ P_1\circ W_1)y\|\nonumber\\
&=\|(W_2\circ((1-\alpha_1)\Id+\alpha_1 Q_1)\circ W_1)x-
(W_2\circ ((1-\alpha_1)\Id+\alpha_1 Q_1)\circ W_1)y\|\nonumber\\
&\leq(1-\alpha_1)\|(W_2\circ W_1)(x-y)\|+
\alpha_1\|(W_2\circ Q_1\circ W_1)x
-(W_2\circ Q_1\circ W_1)y\|\nonumber\\
&\leq(1-\alpha_1)\|W_2\circ W_1\|\,\|x-y\|
+\alpha_1\|W_2\|\,\|Q_1(W_1x)-Q_1(W_1y)\|\nonumber\\
&\leq(1-\alpha_1)\|W_2\circ W_1\|\,\|x-y\|
+\alpha_1\|W_2\|\,\|W_1(x-y)\|\nonumber\\
&\leq\big((1-\alpha_1)\|W_2\circ W_1\|+
\alpha_1\|W_2\|\,\|W_1\|\big)\|x-y\|.
\end{align}
Hence, $T$ is Lipschitzian with constant 
\begin{equation}
(1-\alpha_1)\|W_2\circ W_1\|+\alpha_1\|W_2\|\,\|W_1\|
=\beta_{2;\emp}\|W_2\circ W_1\|+ \beta_{2;\{1\}} \|W_2\|\,\|W_1\| =
\theta_2.
\end{equation}
Now assume that $m>2$ and that \eqref{e:recurrence} holds at order
$m-1$. Then
\begin{align*}
&\|(W_m\circ P_{m-1}\circ\cdots\circ P_1\circ W_1)x
-(W_m\circ P_{m-1}\circ\cdots\circ P_1\circ W_1)y\|\nonumber\\
&=\|(W_m\circ ((1-\alpha_{m-1})\Id+\alpha_{m-1} Q_{m-1})\circ
\cdots\circ P_1\circ W_1)x\nonumber\\
&\quad\;-(W_m\circ ((1-\alpha_{m-1})\Id+\alpha_{m-1} Q_{m-1})\circ
\cdots\circ P_1\circ W_1)y\|\nonumber\\
&\leq(1-\alpha_{m-1})\|(W_m\circ W_{m-1}\circ\cdots\circ
P_1\circ W_1)x-(W_m\circ W_{m-1}\circ\cdots\circ
P_1\circ W_1)y\|\nonumber\\
&\quad\;+\alpha_{m-1}\|(W_m\circ Q_{m-1}\circ W_{m-1}\circ
\cdots\circ P_1\circ W_1)x-(W_m\circ Q_{m-1}\circ W_{m-1}\circ
\cdots\circ P_1\circ W_1)y\|\nonumber\\
&\leq(1-\alpha_{m-1})\|(W_m\circ W_{m-1}\circ\cdots\circ
P_1\circ W_1)x-(W_m\circ W_{m-1}\circ\cdots\circ
P_1\circ W_1)y\|\nonumber\\
&\quad+\alpha_{m-1}\|W_m\|\,\|(Q_{m-1}\circ W_{m-1}\circ \cdots
\circ P_1\circ W_1)x-(Q_{m-1}\circ W_{m-1}\circ\cdots\circ
P_1\circ W_1)y\|.
\end{align*}
Hence, the nonexpansiveness of $Q_{m-1}$ yields
\begin{align}
&\|(W_m\circ P_{m-1}\circ\cdots\circ P_1\circ W_1)x
-(W_m\circ P_{m-1}\circ\cdots\circ P_1\circ W_1)y\|\nonumber\\
&\leq (1-\alpha_{m-1})\|(W_m\circ W_{m-1}\circ P_{m-2}\circ
\cdots\circ  W_1)x-(W_m\circ W_{m-1}\circ
P_{m-2}\circ\cdots\circ W_1)y\|\nonumber\\
&\quad\;+\alpha_{m-1}\|W_m\|\,\|(W_{m-1}\circ P_{m-2}\circ \cdots
\circ P_1\circ W_1)x-(W_{m-1}\circ P_{m-2}\circ\cdots\circ
P_1\circ W_1)y\|.\nonumber\\
\label{e:21}
\end{align}
On the other hand, the induction hypothesis yields
\begin{align}
&
\|(W_{m-1}\circ P_{m-2}\circ\cdots\circ P_1\circ W_1)x-
(W_{m-1}\circ P_{m-2}\circ\cdots\circ P_1\circ W_1)y\|\nonumber\\
&\!\!\!\leq\theta_{m-1}\|x-y\|\nonumber\\
&\!\!\!=\bigg(\beta_{m-1;\emp}\|W_{m-1}\circ\cdots\circ W_1\|
+\sum_{k=1}^{m-2}\sum_{(j_1,\ldots,j_k)\in\JJ_{m-1,k}}
\beta_{m-1;\{j_1,\ldots,j_k\}}\sigma_{m-1;\{j_1,\ldots,j_k\}}
\bigg)\|x-y\|.\nonumber\\
\label{e:22}
\end{align}
Similarly, replacing $W_{m-1}$ by $W_m\circ W_{m-1}$ above, we get
\begin{align}
&
\|((W_m\circ W_{m-1})\circ P_{m-2}\circ\cdots\circ P_1\circ W_1)x-
((W_m\circ W_{m-1})\circ P_{m-2}\circ\cdots\circ P_1\circ W_1)y\|
\nonumber\\
&\leq\bigg(\beta_{m-1;\emp}\|W_m\circ W_{m-1}\circ\cdots\circ
W_1\|+\sum_{k=1}^{m-2}
\sum_{(j_1,\ldots,j_k)\in \JJ_{m-1,k}} 
\beta_{m-1;\{j_1,\ldots,j_k\}}\sigma_{m;\{j_1,\ldots,j_k\}}
\bigg)\|x-y\|.\nonumber\\
\label{e:23}
\end{align}
\vskip -4mm
\noindent
Using \eqref{e:21}, and then inserting \eqref{e:23} and
\eqref{e:22}, we obtain
\begin{align}
&\|(W_m\circ P_{m-1}\circ\cdots\circ P_1\circ W_1)x
-(W_m\circ P_{m-1}\circ\cdots\circ P_1\circ W_1)y\|\nonumber\\
&\leq (1-\alpha_{m-1})\|(W_m\circ W_{m-1}\circ P_{m-2}\circ
\cdots\circ  W_1)x-(W_m\circ W_{m-1}\circ
P_{m-2}\circ\cdots\circ W_1)y\|\nonumber\\
&\quad\;+\alpha_{m-1}\|W_m\|\,\|(W_{m-1}\circ P_{m-2}\circ \cdots
\circ P_1\circ W_1)x-(W_{m-1}\circ P_{m-2}\circ\cdots\circ
P_1\circ W_1)y\|\nonumber\\
&\leq (1-\alpha_{m-1})\times\nonumber\\
&\quad\;
\bigg(\beta_{m-1;\emp}\|W_m\circ W_{m-1}\circ\cdots\circ
W_1\|+\sum_{k=1}^{m-2}
\sum_{(j_1,\ldots,j_k)\in \JJ_{m-1,k}} 
\beta_{m-1;\{j_1,\ldots,j_k\}}\sigma_{m;\{j_1,\ldots,j_k\}}
\bigg)\|x-y\|
\nonumber\\
&\quad\;+\alpha_{m-1}\|W_m\|\times\nonumber\\
&\quad\;
\bigg(\beta_{m-1;\emp}\|W_{m-1}\circ\cdots\circ W_1\|
+\sum_{k=1}^{m-2}\sum_{(j_1,\ldots,j_k)\in\JJ_{m-1,k}}
\beta_{m-1;\{j_1,\ldots,j_k\}}\sigma_{m-1;\{j_1,\ldots,j_k\}}
\bigg)\|x-y\|.
\label{e:24}
\end{align}
Furthermore, we deduce from \eqref{e:beta} that 
\begin{equation}
(\forall\,\JJ\subset\{1,\ldots,m-1\})\quad\beta_{m;\JJ}=
\begin{cases}
(1-\alpha_{m-1})\beta_{m-1;\JJ},
&\text{if}\;\;m-1\not\in \JJ;\\
\alpha_{m-1}\beta_{m-1;\JJ\smallsetminus\{m-1\}},
&\text{if}\;\;m-1\in\JJ.
\end{cases}
\label{e:25}
\end{equation}
Therefore 
\begin{equation}
\label{e:sj88}
\begin{cases}
\beta_{m;\emp}=(1-\alpha_{m-1})\beta_{m-1;\emp}\\
\beta_{m;\{j_1,\ldots,j_k\}}=
(1-\alpha_{m-1})\beta_{m-1;\{j_1,\ldots,j_k\}}
&\text{if}\;\;m-1\notin\{j_1,\ldots,j_k\}\\
\beta_{m;\{j_1,\ldots,j_k\}}=
\alpha_{m-1}\beta_{m-1;\{j_1,\ldots,j_k\}\smallsetminus\{m-1\}}
&\text{if}\;\;m-1\in\{j_1,\ldots,j_k\},
\end{cases}
\end{equation}
which implies that, if $m-1\not\in\{j_1,\ldots,j_k\}$, then
$\beta_{m;\{j_1,\ldots,j_{k},m-1\}}
=\alpha_{m-1}\beta_{m-1;\{j_1,\ldots,j_{k}\}}$.
Hence, \eqref{e:24} yields
\begin{align}
&\hskip -14mm \|(W_m\circ P_{m-1}\circ\cdots\circ P_1\circ W_1)x
-(W_m\circ P_{m-1}\circ\cdots\circ P_1\circ W_1)y\|/\|x-y\|
\nonumber\\
&\leq\beta_{m;\emp}\|W_m\circ W_{m-1}\circ\cdots\circ W_1\|
\nonumber\\
&\quad\;+\sum_{k=1}^{m-2}
\sum_{(j_1,\ldots,j_k)\in \JJ_{m-1,k}} (1-\alpha_{m-1})
\beta_{m-1;\{j_1,\ldots,j_k\}}\sigma_{m;\{j_1,\ldots,j_k\}}
\nonumber\\
&\quad\;+\alpha_{m-1}\beta_{m-1;\emp}
\|W_m\|\,\|W_{m-1}\circ\cdots\circ W_1\|
\nonumber\\
&\quad\;+\sum_{k=1}^{m-2}\sum_{(j_1,\ldots,j_k)\in\JJ_{m-1,k}}
\alpha_{m-1}
\beta_{m-1;\{j_1,\ldots,j_k\}}\|W_m\|\sigma_{m-1;\{j_1,\ldots,j_k\}}
\nonumber\\
&=\beta_{m;\emp}\|W_m\circ W_{m-1}\circ\cdots\circ W_1\|
+\beta_{m;m-1}\sigma_{m;\{m-1\}}
\nonumber\\
&\quad\;+\sum_{k=1}^{m-2}\sum_{(j_1,\ldots,j_k)\in\JJ_{m,k}
\setminus\{m-1\}}
\beta_{m;\{j_1,\ldots,j_k\}}\sigma_{m;\{j_1,\ldots,j_k\}}
\nonumber\\
&\quad\;+\sum_{k=1}^{m-2}\sum_{(j_1,\ldots,j_k)\in\JJ_{m-1,k}}
\beta_{m;\{j_1,\ldots,j_k,m-1\}}\sigma_{m;\{j_1,\ldots,j_k,m-1\}}
\nonumber\\
&=\beta_{m;\emp}\|W_m\circ \cdots\circ W_1\|
+\sum_{j=1}^{m} \beta_{m;\{j\}} \sigma_{m;\{j\}}
\nonumber\\
&\quad\;+
\sum_{k=2}^{m-2}\sum_{(j_1,\ldots,j_k)\in\JJ_{m,k}\setminus\{m-1\}}
\beta_{m;\{j_1,\ldots,j_k\}}\sigma_{m;\{j_1,\ldots,j_k\}}
\nonumber\\
&\quad\;+\sum_{k=2}^{m-1}
\sum_{\substack{(j_1,\ldots,j_k)\in\JJ_{m,k}\\j_{k}=m-1}}
\beta_{m;\{j_1,\ldots,j_k\}}\sigma_{m;\{j_1,\ldots,j_k\}}
\nonumber\\
&=\beta_{m;\emp}\|W_m\circ\cdots\circ W_1\|
+\sum_{k=1}^{m-1}
\sum_{(j_1,\ldots,j_k)\in\JJ_{m,k}} \beta_{m;\{j_1,\ldots,j_k\}}
\sigma_{m;\{j_1,\ldots,j_k\}}
\nonumber\\
&=\theta_m.
\label{e:26}
\end{align}
Thus, we obtain
\begin{equation}
\label{e:B18}
\|(W_m\circ P_{m-1}\circ\cdots\circ P_1\circ W_1)x-(W_m\circ
P_{m-1}\circ\cdots\circ P_1\circ W_1)y\|\leq\theta_m\|x-y\|,
\end{equation}
which establishes \eqref{e:recurrence}.

\subsection{Proof of Proposition~\ref{p:1}}
Define $(\beta_{m;\JJ})_{\JJ\subset\{1,\ldots,m-1\}}$ as in 
\eqref{e:beta}.
\ref{p:1ii}:
For every $k\in\{1,\ldots,m-1\}$ and every
$(j_1,\ldots,j_k)\in\JJ_{m,k}$, \eqref{e:wcal} yields
\begin{equation}
\|W_m\circ\cdots\circ W_1\| \leq
\sigma_{m;\{j_1,\ldots,j_k\}} \leq\prod_{i=1}^m\|W_i\|.
\end{equation}
Consequently, it follows from \eqref{e:defthetam} that
\begin{equation}
\label{e:bst}
\|W_m\circ\cdots\circ W_1\|
\sum_{\JJ\subset\{1,\ldots,m-1\}}\beta_{m;\JJ}
\leq\theta_m\leq\Bigg(\prod_{i=1}^m\|W_i\|\Bigg)
\sum_{\JJ\subset\{1,\ldots,m-1\}} \beta_{m;\JJ}.
\end{equation}
In view of \eqref{e:beta}, 
$(\beta_{m;\JJ})_{\JJ\subset\{1,\ldots,m-1\}}$ is 
the discrete probability distribution of a vector of $m-1$
independent Bernoulli random variables. Hence,
$\sum_{\JJ\subset\{1,\ldots,m-1\}}\beta_{m;\JJ}=1$
in \eqref{e:bst}. 
\ref{p:1iii}:
For every $i\in\{1,\ldots,m-1\}$, $\alpha_i=0$. Therefore, in 
view of \eqref{e:beta},
\begin{equation}
(\forall\JJ\subset\{1,\ldots,m-1\})
\quad\beta_{m;\JJ}=
\begin{cases}
1,&\text{if}\;\;\JJ=\emp;\\
0,&\text{if}\;\;\JJ\neq\emp.
\end{cases}
\end{equation}
Hence, the result follows from \eqref{e:defthetam}.
\ref{p:1iv}:
For every $i\in\{1,\ldots,m-1\}$, $\alpha_i=1$. Therefore, in 
view of \eqref{e:beta},
\begin{equation}
(\forall\JJ\subset\{1,\ldots,m-1\})\quad\beta_{m;\JJ}=
\begin{cases}
1,&\text{if}\;\;\JJ=\{1,\ldots,m-1\};\\
0,&\text{if}\;\;\JJ\neq\{1,\ldots,m-1\}.
\end{cases}
\end{equation}
Invoking \eqref{e:defthetam} allows us to conclude.
\ref{p:1v}:
For every $i\in\{1,\ldots,m-1\}$ $\alpha_i=1/2$. 
Hence, \eqref{e:beta} yields
$(\forall\JJ\subset\{1,\ldots,m-1\})$
$\beta_{m;\JJ}=2^{1-m}$.
Invoking once again \eqref{e:defthetam} yields the result.
\ref{p:1vi}:
It follows from \eqref{e:wcal} that
\begin{align}
\label{e:m1}
&\hskip -4mm
\sum_{k=1}^{m-1}\sum_{1\leq j_1<\ldots<j_k\leq m-1}
\beta_{m;\{j_1,\ldots,j_k\}}
\sigma_{m;\{j_1,\ldots,j_k\}}\nonumber\\
&=\;\sum_{k=1}^{m-1}\sum_{1\leq j_1<\ldots<j_k\leq m-1}
\beta_{m;\{j_1,\ldots,j_k\}}
\|W_m\circ\cdots\circ W_{j_k+1}\|\,
\|W_{j_k}\circ\cdots\circ W_{j_{k-1}+1}\|\nonumber\\
&\hskip 102mm\cdots \|W_{j_1}\circ\cdots\circ W_1\|.
\end{align}
We decompose this expression in a sum of terms depending on
the value $i$ taken by $j_{k}$, namely,
\begin{align}
&\hskip -4mm
\sum_{k=1}^{m-1}\sum_{1\leq j_1<\ldots<j_k\leq m-1}
\beta_{m;\{j_1,\ldots,j_k\}}
\sigma_{m;\{j_1,\ldots,j_k\}}\nonumber\\
&=\;\sum_{i=1}^{m-1} \beta_{m;\{i\}} 
\|W_m\circ\cdots\circ W_{i+1}\|\,\|W_i\circ\cdots W_1\|\nonumber\\
&\quad+\sum_{k=2}^{i-1}\sum_{1\leq j_1<\ldots<j_{k-1}\leq i-1}
\beta_{m;\{j_1,\ldots,j_{k-1},i\}}
\|W_m\circ\cdots\circ W_{i+1}\|\,
\|W_i\circ\cdots\circ W_{j_{k-1}+1}\|\nonumber\\
&\hskip 102mm\cdots \|W_{j_1}\circ\cdots\circ W_1\|.
\label{e:jaienviederoupiller}
\end{align}
In addition, for every $(j_1,\ldots,j_{k-1})\in\JJ_{i,k-1}$, we
derive from \eqref{e:beta} that
\begin{align}
\beta_{m;\{j_1,\ldots,j_{k-1},i\}}
&=\Bigg(\prod_{j\in\{j_1,\ldots,j_{k-1},i\}}\alpha_j\Bigg)
\prod_{j\in\{1,\ldots,m-1\}\smallsetminus
\{j_1,\ldots,j_{k-1},i\}}(1-\alpha_j)\nonumber\\
&=\alpha_i\Bigg(\prod_{j\in\{j_1,\ldots,j_{k-1}\}}
\alpha_j\Bigg)
\Bigg(\prod_{q=i+1}^{m-1}(1-\alpha_{q})\Bigg) 
\prod_{j\in\{1,\ldots,i-1\}\smallsetminus
\{j_1,\ldots,j_{k-1}\}}(1-\alpha_j)\nonumber\\
&=\alpha_i\Bigg(\prod_{q=i+1}^{m-1}(1-\alpha_{q})\Bigg) 
\beta_{i;\{j_1,\ldots,j_{k-1}\}}.
\end{align}
Using the above equality in \eqref{e:jaienviederoupiller},
factorizing common factors, and invoking \eqref{e:defthetam} yields 
\begin{align}
\label{e:m2}
&\hskip -9mm
\sum_{k=1}^{m-1}\sum_{1\leq j_1<\ldots<j_k\leq m-1}
\beta_{m;\{j_1,\ldots,j_k\}}
\sigma_{m;\{j_1,\ldots,j_k\}}\nonumber\\
&=\; \sum_{i=1}^{m-1} \alpha_i
\Bigg(\prod_{q=i+1}^{m-1}(1-\alpha_{q})\Bigg) 
\|W_m\circ\cdots\circ W_{i+1}\| \Big( \beta_{i;\emp}\|W_i\circ
\cdots W_1\|\nonumber\\
&\quad+ \sum_{k=2}^i\sum_{1\leq j_1<\ldots<j_{k-1}\leq i-1}
\beta_{i;\{j_1,\ldots,j_{k-1}\}}
\|W_i\circ\cdots\circ W_{j_{k-1}+1}\|\cdots 
\|W_{j_1}\circ\cdots\circ W_1\|\Big)\nonumber\\\
&=\; \sum_{i=1}^{m-1}\alpha_i\theta_i
\Bigg(\prod_{q=i+1}^{m-1}(1-\alpha_{q})\Bigg) 
\|W_m\circ\cdots\circ W_{i+1}\|,
\end{align}
and we obtain \eqref{e:thetamrec}.

\subsection{Proof of Proposition~\ref{p:alphagencond}}
Let $l\in\{1,\ldots,m-1\}$ and set
\begin{equation}
\label{e:betamod}
(\forall\,\JJ\subset \{1,\ldots,m-1\}\smallsetminus\{l\})
\quad\beta_{m,l;\JJ}=\Bigg(\prod_{j\in\JJ}\alpha_j\Bigg)
\prod_{j\in\{1,\ldots,m-1\}
\smallsetminus(\JJ\cup\{l\})}(1-\alpha_j).
\end{equation}
For every $k\in \{1,\ldots,m-1\}$ and every
$(j_1,\ldots,j_k)\in\JJ_{m,k}$, \eqref{e:wcal} yields
\begin{equation}
\label{e:ineqcalW}
\sigma_{m;\{j_1,\ldots,j_k\}} \leq 
\sigma_{m;\{j_1,\ldots,j_k\}\cup\{l\}}.
\end{equation}
We infer from \eqref{e:defthetam} that
\begin{align}
\label{e:defthetamm}
&\hskip -3mm\theta_m(\alpha_1,\ldots,\alpha_{m-1})\nonumber\\
&=(1-\alpha_{l})\beta_{m,l;\emp} 
\|W_m\circ\cdots\circ W_1\|+\sum_{k=1}^{m-1}
\sum_{\substack{(j_1,\ldots,j_k)\in\JJ_{m,k}\\ 
l\in\{j_1,\ldots,j_k\}}} 
\alpha_{l}\beta_{m,l;\{j_1,\ldots,j_k\}\smallsetminus\{l\}}
\sigma_{m;\{j_1,\ldots,j_k\}}\nonumber\\
&\quad\;+\sum_{k=1}^{m-2}
\sum_{\substack{(j_1,\ldots,j_k)\in\JJ_{m,k}\nonumber\\ 
l\not\in\{j_1,\ldots,j_k\}}} 
(1-\alpha_{l})\beta_{m,l;\{j_1,\ldots,j_k\}}
\sigma_{m;\{j_1,\ldots,j_k\}}\nonumber\\
&=\beta_{m,l;\emp} \big((1-\alpha_{l}) \|W_m\circ\cdots\circ
W_1\|+\alpha_{l}\|W_m\circ \cdots W_{l+1}\|\, 
\|W_{l}\circ\cdots\circ W_1\|\big)\nonumber\\
&\quad\;+\sum_{k=1}^{m-2}\sum_{\substack{(j_1,\ldots,j_k)\in
\JJ_{m,k}\\ 
l\not\in \{j_1,\ldots,j_k\}}}
\beta_{m,l;\{j_1,\ldots,j_k\}}
\big((1-\alpha_{l})
\sigma_{m;\{j_1,\ldots,j_k\}}+
\alpha_{l}\sigma_{m;\{j_1,\ldots,j_k\}\cup\{l\}}\big).
\end{align}
In view of \eqref{e:ineqcalW} we conclude that 
\begin{align}
\dfrac{\partial\theta_m}{\partial\alpha_{l}}
(\alpha_1,\ldots,\alpha_{m-1})
&=\beta_{m,l;\emp} \big(\|W_m\circ \cdots W_{l+1}\|\, 
\|W_{l}\circ\cdots 
\circ W_1\|-\|W_m\circ\cdots\circ W_1\|\big)\nonumber\\
&\quad\;+\sum_{k=1}^{m-2}
\sum_{\substack{(j_1,\ldots,j_k)\in\JJ_{m,k}\\ l \not\in
\{j_1,\ldots,j_k\}}}\beta_{m,l;\{j_1,\ldots,j_k\}}
\big(\sigma_{m;\{j_1,\ldots,j_k\}\cup\{l\}}-
\sigma_{m;\{j_1,\ldots,j_k\}}\big)
\geq 0.
\end{align}

\subsection{Proof of Theorem~\ref{t:sep}}
\ref{t:sepi}:
For every $i\in\{1,\ldots,m\}$, set $P_i=R_i(\cdot+b_i)$ and
$(\forall k\in\bK_i)$ 
$\pi_{i,k}=$ $\varrho_{i,k}(\cdot+\scal{b_i}{e_{i,k}})$.
Note that,
for every $i\in\{1,\ldots,m\}$ and every $k\in\bK_i$, $\pi_{i,k}$
is $\alpha_i$-averaged. Furthermore, 
$(\forall i\in\{1,\ldots,m-1\})
(\forall x\in\HH_i)$ $P_ix=\sum_{k\in\bK_i}
\pi_{i,k}(\scal{x}{e_{i,k}})e_{i,k}$.
Now fix $x$ and $y$ in $\HH_0$. It follows from \eqref{e:defTi} 
and the nonexpansiveness of $P_m$ that
\begin{equation}
\|Tx-Ty\|\leq  
\|(W_m\circ P_{m-1}\circ W_{m-1}\circ\cdots\circ P_1\circ W_1)x-
(W_m\circ P_{m-1}\circ W_{m-1}\circ\cdots\circ P_1\circ W_1)y\|.
\end{equation}
In view of Lemma~\ref{l:alphascal}, for every
$i\in\{1,\ldots,m-1\}$, there exists 
$\Lambda_i\in\mathscr{D}_{[1-2\alpha_i,1]}(E_i)$ such that
\begin{multline}
(P_i\circ W_i\circ\cdots\circ P_1\circ W_1)x-
(P_i\circ W_i\circ\cdots\circ P_1\circ W_1)y\\
=\Lambda_i\Big(
(W_i\circ P_{i-1}\circ\cdots\circ P_1\circ W_1)x-
(W_i\circ P_{i-1}\circ\cdots\circ P_1\circ W_1)y\Big).
\end{multline}
Recursive application of this identity yields 
\begin{multline}
(P_{m-1}\circ W_{m-1}\circ\cdots\circ P_1\circ W_1)x-
(P_{m-1}\circ W_{m-1}\circ\cdots\circ P_1\circ W_1)y\\
=(\Lambda_{m-1}\circ W_{m-1}\circ \cdots\circ
\Lambda_1\circ W_1)(x-y).
\end{multline}
This implies that
$\|Tx-Ty\|\leq\|W_m\circ\Lambda_{m-1}\circ\cdots\circ\Lambda_1\circ
W_1\|\,\|x-y\|$. Thus, 
\begin{equation}
\label{e:thetaminterv}
\vartheta_m= 
\sup_{\substack{\Lambda_1\in\mathscr{D}_{[1-2\alpha_1,1]}
(E_1)\\\vdots\\
\Lambda_{m-1}\in\mathscr{D}_{[1-2\alpha_{m-1},1]}
(E_{m-1})}}\|W_m\circ\Lambda_{m-1}\circ\cdots\circ
\Lambda_1\circ W_1\|.
\end{equation}
is a Lipschitz constant of $T$.
Set $S=\{1-2\alpha_1,1\}^{\bK_1}\times\cdots\times
\{1-2\alpha_{m-1},1\}^{\bK_{m-1}}$ and 
$C=[1-2\alpha_1,1]^{\bK_1}\times\cdots\times
[1-2\alpha_{m-1},1]^{\bK_{m-1}}$. 
For every $i\in\{1,\ldots,m-1\}$, 
$\Lambda_i\colon\HH_i\to\HH_i$ is generated from a sequence 
$(\lambda_{i,k})_{k\in\bK_i}$ in $[1-2\alpha_i,1]$ 
via the construction of \eqref{e:defLambda}. The function
\begin{equation}
\begin{array}{rcl}
\psi\colon&C&\to\;\RR\\
&\;\;\big((\lambda_{1,k})_{k\in\bK_1},\ldots,
(\lambda_{m-1,k})_{k\in\bK_{m-1}}\big)&\mapsto\;
\|W_m\circ\Lambda_{m-1}\circ\cdots\circ\Lambda_1\circ W_1\|
\end{array}
\end{equation}
is convex with respect to each of its coordinates. 
Hence, we deduce from Lemma~\ref{l:sup} that
$\sup\psi(C)=\sup\psi(\conv S)=\sup\psi(S)$, 
as claimed.

\ref{t:sepii}:
For every $i\in\{1,\ldots,m-1\}$, the identity operator 
$\Id_i$ of $\HH_i$ lies in
$\mathscr{D}_{\{1-2\alpha_i,1\}}(E_i)$. Hence,
%\begin{equation}
%\label{e:thetambin4}
$\vartheta_m\geq\|W_m\circ\Id_{m-1}\circ\cdots\circ\Id_1\circ W_1\|=
\|W_m\circ\cdots\circ W_1\|$.
%\end{equation}
For every $i\in\{1,\ldots,m-1\}$, let
$\Lambda_i\in\mathscr{D}_{\{1-2\alpha_i,1\}}(E_i)$ and note
that the linear operator
\begin{equation}
\Theta_i= 
\begin{cases}
\dfrac{\Lambda_i-(1-\alpha_i)\Id_i}{\alpha_i},
&\text{if}\;\;\alpha_i\neq 0;\\
0,&\text{otherwise}
\end{cases}
\end{equation}
is nonexpansive. Using the same kind of decomposition as in
the proof of Theorem~\ref{t:propLipgen} yields
\begin{align}
&\hskip -5mm
\big\|W_m\circ\Lambda_{m-1}\circ\cdots\circ \Lambda_1\circ
W_1\big\|\nonumber\\
&=\big\|W_m\circ\big((1-\alpha_{m-1})\Id_{m-1}+\alpha_{m-1}
\Theta_{m-1}\big)\circ\cdots\circ\big((1-\alpha_1)\Id_1+\alpha_1
\Theta_1\big)\circ W_1\big\|
\leq\theta_m\nonumber
\end{align}
and allows us to conclude that $\vartheta_m\leq\theta_m$. 

\subsection{Proof of Proposition~\ref{p:35}}
It follows from \eqref{e:thetaminterv} that
\begin{align}
\vartheta_m(\alpha_1,\ldots,\alpha_{m-1})
&=\sup_{\substack{\Lambda_1\in\mathscr{D}_{[1-2\alpha_1,1]}
(E_1)\\\vdots\\
\Lambda_{m-1}\in\mathscr{D}_{[1-2\alpha_{m-1},1]}
(E_{m-1})}}\|W_m\circ\Lambda_{m-1}\circ\cdots\circ
\Lambda_1\circ W_1\|\nonumber\\
&\leq\sup_{\substack{\Lambda_1\in\mathscr{D}_{[1-2\alpha'_1,1]}
(E_1)\\\vdots\\
\Lambda_{m-1}\in\mathscr{D}_{[1-2\alpha'_{m-1},1]}
(E_{m-1})}}\|W_m\circ\Lambda_{m-1}\circ\cdots\circ
\Lambda_1\circ W_1\|
\nonumber\\
&=\vartheta_m(\alpha'_1,\ldots,\alpha'_{m-1}).
\end{align}

\subsection{Proof of Proposition~\ref{p:sepbis}}
Let us first note that, because of the embeddings, 
$W_1\colon\GG_0\to\HH_1$ is continuous and, likewise, every
$\Lambda_m\in\mathscr{D}_{[1-2\alpha_m,1]}(E_m)$ 
is continuous from $\HH_m$ to $\GG_m$. Hence,
for every $(\Lambda_i)_{1\leq i\leq m}\in
\mathscr{D}_{[1-2\alpha_1,1]}(E_1)\times\cdots\times
\mathscr{D}_{[1-2\alpha_m,1]}(E_{m-1})$,
$\Lambda_m\circ W_m \circ\cdots\circ\Lambda_1\circ W_1\colon
\GG_0\to\GG_m$ is continuous. 
We now follow the same argument as in the proof of 
Theorem~\ref{t:sep}. Let $x$ and $y$ be in $\GG_0$.
For every $i\in\{1,\ldots,m\}$, there exists 
$\Lambda_i\in\mathscr{D}_{[1-2\alpha_i,1]}(E_i)$
such that
$Tx-Ty=(\Lambda_m\circ W_m\circ \Lambda_{m-1}\circ\cdots\circ
\Lambda_1\circ W_1)(x-y)$.
Thus, $\|Tx-Ty\|_{\GG_m}\leq \|\Lambda_m\circ W_m\circ
\Lambda_{m-1}\circ 
\cdots\circ\Lambda_1\circ W_1\|_{\GG_0,\GG_m}\,\|x-y\|_{\GG_0}$,
which leads to \eqref{e:thetambinbis}. 

\subsection{Proof of Corollary~\ref{c:sepbis}}
Since, for every $x\in\HH_m$, 
$(\scal{x}{e_{m,k}})_{k\in\bK_m}\in\ell^2(\bK_m)$,
it follows from H\"older's inequality that $\|\cdot\|_{\GG_m}$ 
in \eqref{e:normlp} 
is well defined and does provide a continuous
embedding of $\HH_m$ in $\GG_m$. As in the proof of
Theorem~\ref{t:sep}, it is enough to take the supremum in 
\eqref{e:dl231} over
$\boldsymbol{D}=\mathscr{D}_{[1-2\alpha_1,1]}(E_1)\times\cdots\times
\mathscr{D}_{[1-2\alpha_{m-1},1]}(E_{m-1})$.
For every $i\in\{1,\ldots,m\}$,
let $\Lambda_i\in\mathscr{D}_{[1-2\alpha_i,1]}(E_i)$. Then
\begin{equation}
\label{e:cyu}
\|\Lambda_m\circ W_m\circ \Lambda_{m-1}\circ\cdots\circ \Lambda_1
\circ W_1\|_{\GG_0,\GG_m}
\leq\|\Lambda_m\|_{\GG_m,\GG_m}\,\| W_m\circ \Lambda_{m-1}\circ 
\cdots\circ\Lambda_1\circ W_1\|_{\GG_0,\GG_m}.
\end{equation}
Let us designate by $(\lambda_{m,k})_{k\in\bK_m}$ the sequence 
in $[1-2\alpha_m,1]$ involved in the construction of
$\Lambda_m$ in \eqref{e:defLambda}.
If $p<\pinf$, then 
\begin{align}
(\forall x\in\HH_m)\quad\|\Lambda_m x\|_{\GG_m} 
&=\Bigg\|\sum_{k\in\bK_m}\lambda_{m,k}\scal{x}{e_{m,k}} 
e_{m,k}\Bigg\|_{\GG_m}
=\Bigg|\sum_{k\in\bK_m}\omega_k|\lambda_{m,k}
\scal{x}{e_{m,k}}|^{p}\Bigg|^{1/p}
\nonumber\\
&\leq\Bigg|\sum_{k\in\bK_m}
\omega_k|\scal{x}{e_{m,k}}|^{p}\Bigg|^{1/p}
=\|x\|_{\GG_m},
\end{align}
which shows that $\|\Lambda_m\|_{\GG_m,\GG_m}\leq 1$.
This inequality holds analogously if $p=\pinf$.
We then deduce from \eqref{e:cyu} that
$\vartheta_m\leq
\sup_{(\Lambda_1,\ldots,\Lambda_{m-1})\in\boldsymbol{D}}
\|W_m\circ\Lambda_{m-1}\circ\cdots\circ\Lambda_1\circ
W_1\|_{\GG_0,\GG_m}$.
On the other hand, it follows from \eqref{e:thetambinbis} that
\begin{equation}
\label{e:minvarthetamlin}
\vartheta_m\geq
\sup_{(\Lambda_1,\ldots,\Lambda_{m-1})\in\boldsymbol{D}} 
\|\Id_m\circ W_m\circ\Lambda_{m-1}\circ\cdots\circ 
\Lambda_1\circ W_1\|_{\GG_0,\GG_m},
\end{equation}
which concludes the proof. 

\subsection{Proof of Proposition~\ref{p:sepbispos}}
For every $i\in\{1,\ldots,m-1\}$, let
$\Lambda_i\in\mathscr{D}_{\{1-2\alpha_i,1\}}(E_i)$ and let
$(\lambda_{i,k})_{k\in\bK_i}$ be the associated sequence in
\eqref{e:defLambda}. Define 
\begin{equation}
\label{e:dl230}
(\forall k\in\bK_m) \quad\lambda_{m,k}=
\begin{cases}
-1,&\text{if}\;\;(\exi (k_0,\ldots,k_{m-1})\in\bK_0\times
\cdots\times \bK_{m-1})\;\mu_{k_0,\ldots,k_{m-1},k}  < 0;\\
1,&\text{otherwise,}
\end{cases}
\end{equation}
and set
$\Lambda_m\colon\HH_m\to\HH_m\colon
x\mapsto\sum_{k\in\bK_m}\lambda_{m,k}\scal{x}{e_{m,k}} e_{m,k}$
and $V_m=\Lambda_mW_m$. Then, by \eqref{e:condpos},
\begin{multline} 
\big(\forall (k_0,\ldots,k_m)\in\bK_0\times\cdots\times
\bK_m\big)\\
\scal{W_1e_{0,k_0}}{e_{1,k_1}} 
\cdots 
\scal{W_{m-1}e_{m-2,k_{m-2}}}{e_{m-1,k_{m-1}}}
\scal{V_me_{m-1,k_{m-1}}}{e_{m,k_m}}
\geq 0.
\end{multline}
In addition, it follows from \eqref{e:normlp} and 
\eqref{e:dl230} that
\begin{align}
\|W_m\circ \Lambda_{m-1}\circ\cdots\circ\Lambda_1\circ
W_1\|_{\GG_0,\GG_m} 
&=\|\Lambda_m\circ V_m\circ \Lambda_{m-1}\circ
W_{m-1}\circ\cdots\circ\Lambda_1
\circ W_1\|_{\GG_0,\GG_m}\nonumber\\
&=\|V_m\circ \Lambda_{m-1}\circ
W_{m-1}\circ\cdots\circ\Lambda_1
\circ W_1\|_{\GG_0,\GG_m}.
\end{align}
Therefore, without loss of generality, we assume that
\begin{equation} 
\label{e:condposbis}
\big(\forall (k_0,\ldots,k_m)\in\bK_0\times\cdots\times
\bK_m\big) \qquad \mu_{k_0,\ldots,k_m}\geq 0.
\end{equation}
Let us now show that
\begin{equation}
\label{e:boundlinnonlin}
\|W_m\circ\Lambda_{m-1}\circ\cdots\circ\Lambda_1\circ
W_1\|_{\GG_0,\GG_m}\leq\|W_m\circ\cdots\circ W_1\|_{\GG_0,\GG_m}.
\end{equation}
Let $\varepsilon\in\RPP$. Then there exists $x\in\HH_0$ such that
$\|x\|_{\GG_0}=1$ and
\begin{equation}
\label{e:boundlinnonlineps}
\|W_m\circ \Lambda_{m-1}\circ\cdots\circ\Lambda_1\circ
W_1\|_{\GG_0,\GG_m}\leq\|(W_m\circ\Lambda_{m-1}
\circ\cdots\circ\Lambda_1
\circ W_1)x\|_{\GG_m}+\varepsilon.
\end{equation}
If $p<+\infty$ in \eqref{e:normlp}, this yields
\begin{multline}
\|W_m\circ\Lambda_{m-1}\circ\cdots\circ\Lambda_1\circ
W_1\|_{\GG_0,\GG_m}\\
\leq\Bigg|\sum_{k_m\in\bK_m}\omega_{k_m} 
\abscal{(W_m\circ \Lambda_{m-1}\circ\cdots\circ\Lambda_1
\circ W_1)x}{e_{m,k_m}}^p\Bigg|^{1/p}+\varepsilon.
\end{multline}
On the other hand, 
\begin{multline}
(W_m\circ\Lambda_{m-1}\circ\cdots\circ
\Lambda_1\circ W_1)x\\
=\sum_{k_{m-1}\in \bK_{m-1}} 
\scal{(\Lambda_{m-1}\circ W_{m-1}\circ\cdots\circ
\Lambda_1\circ W_1)x}{e_{m-1,k_{m-1}}} 
W_me_{m-1,k_{m-1}}
\end{multline}
which, in view of \eqref{e:defLambda}, implies that
\begin{align*}
\hskip -5mm 
(\forall k_m\in\bK_m)\;\;
&\scal{(W_m\circ\Lambda_{m-1}\circ\cdots\circ
\Lambda_1\circ W_1)x}{e_{m,k_m}}\nonumber\\
&\hskip -16mm
=\sum_{k_{m-1}\in \bK_{m-1}} 
\scal{(\Lambda_{m-1}\circ W_{m-1}\circ\cdots\circ
\Lambda_1\circ W_1)x}{e_{m-1,k_{m-1}}} 
\scal{W_me_{m-1,k_{m-1}}}{e_{m,k_m}}\nonumber\\
&\hskip -16mm
=\sum_{k_{m-1}\in \bK_{m-1}} 
\lambda_{m-1,k_{m-1}}\scal{W_me_{m-1,k_{m-1}}}{e_{m,k_m}}
\scal{(W_{m-1}\circ\cdots\circ
\Lambda_1\circ W_1)x}{e_{m-1,k_{m-1}}}.
\end{align*}
Using \eqref{e:elt} recursively yields
\begin{align}
(\forall k_m\in\bK_m)\quad
&\scal{(W_m\circ\Lambda_{m-1}\circ\cdots\circ
\Lambda_1\circ W_1)x}{e_{m,k_m}}\nonumber\\
=&
\sum_{(k_0,\ldots,k_{m-1})\in\bK_0\times\dots\times
\bK_{m-1}}\mu_{k_0,\ldots,k_m}\lambda_{m-1,k_{m-1}}
\cdots\lambda_{1,k_1}\scal{x}{e_{0,k_0}}.
\end{align}
We then deduce from \eqref{e:condposbis} that
\begin{align}
\label{e:boundscalabse0x}
&\hskip -3mm
(\forall k_m\in\bK_m)\quad
\big|\scal{(W_m\circ\Lambda_{m-1}\circ\cdots\circ
\Lambda_1\circ W_1)x}{e_{m,k_m}}\big|\nonumber\\
&\hskip 26mm=
\Bigg|\sum_{(k_0,\ldots,k_{m-1})\in\bK_0\times\dots\times
\bK_{m-1}}\mu_{k_0,\ldots,k_m}\lambda_{m-1,k_{m-1}}
\cdots\lambda_{1,k_1}\scal{x}{e_{0,k_0}}\Bigg|\nonumber\\
&\hskip 26mm\leq\sum_{(k_0,\ldots,k_{m-1})\in 
\bK_0\times\dots\times\bK_{m-1}}\mu_{k_0,\ldots,k_m}
|\lambda_{m-1,k_{m-1}}|\cdots|\lambda_{1,k_1}|\, 
\abscal{x}{e_{0,k_0}}\nonumber\\
&\hskip 26mm\leq\sum_{(k_0,\ldots,k_{m-1})\in\bK_0\times \dots \times
\bK_{m-1}}\mu_{k_0,\ldots,k_m}\abscal{x}{e_{0,k_0}}.
\end{align}
Set $y=\sum_{k_0\in\bK_0}\abscal{x}{e_{0,k_0}}e_{0,k_0}$.
In view of \eqref{e:symnorm0}, $\|y\|_{\GG_0}=\|x\|_{\GG_0}=1$.
Thus, \eqref{e:boundscalabse0x} yields
\begin{align}
\label{e:boundscalabse0xbis}
\big|\scal{(W_m\circ\Lambda_{m-1}\circ\cdots\circ\Lambda_1
\circ W_1)x}{e_{m,k_m}}\big| 
&\leq \sum_{(k_0,\ldots,k_{m-1})\in\bK_0\times \dots \times
\bK_{m-1}}\mu_{k_0,\ldots,k_m}\scal{y}{e_{0,k_0}}\nonumber\\
&=\scal{(W_m\circ\cdots\circ W_1)y}{e_{m,k_m}}.
\end{align}
It then follows from \eqref{e:boundlinnonlineps} and 
the fact that $\|y\|_{\GG_0}=1$ that
\begin{align}
\|W_m\circ\Lambda_{m-1}\circ\cdots\circ\Lambda_1\circ
W_1\|_{\GG_0,\GG_m}
&\leq\Bigg|\sum_{k_m\in\bK_m}\omega_{k_m} 
|\scal{(W_m\circ\cdots\circ W_1)y}{e_{m,k_m}}|^p\Bigg|^{1/p}
+\varepsilon\nonumber\\
&\leq\|(W_m\circ\cdots\circ W_1)y\|_{\GG_m}+\varepsilon\nonumber\\
&\leq\|W_m\circ\cdots\circ W_1\|_{\GG_0,\GG_m}+\varepsilon.
\end{align}
The same inequality is obtained similarly for $p=+\infty$.
This establishes \eqref{e:boundlinnonlin}, which leads to
\begin{equation}
\sup_{\substack{\Lambda_1\in 
\mathscr{D}_{\{1-2\alpha_1,1\}}(E_1)\\
\vdots\\
\Lambda_{m-1}\in\mathscr{D}_{\{1-2\alpha_{m-1},1\}}(E_{m-1})}} 
\|W_m\circ \Lambda_{m-1}\circ\cdots\circ\Lambda_1
\circ W_1\|_{\GG_0,\GG_m}
\leq\|W_m\circ\cdots\circ W_1\|_{\GG_0,\GG_m}.
\end{equation}
Since the converse inequality holds straightforwardly, the proof 
is complete.

\subsection{Proof of Proposition~\ref{p:sepbisposb}}
We use arguments similar to those of the proof of 
Proposition~\ref{p:sepbispos}. For every $i\in\{1,\ldots,m-1\}$, 
let $\Lambda_i\in\mathscr{D}_{\{1-2\alpha_i,1\}}(E_i)$. 
There exists $x\in\HH_0$ such that $\|x\|_{\GG_0}=1$ and
\begin{equation}
\label{e:boundlinnonlinepsb}
\|W_m\Lambda_{m-1}\cdots\Lambda_1 W_1\|_{\GG_0,\GG_m}=
\|(W_m\Lambda_{m-1}\cdots\Lambda_1 W_1)x\|_{\GG_m}.
\end{equation}
On the other hand, for every $k_m\in \bK_m$,
\begin{equation}
\label{e:boundscalabse0xb}
\big|\scal{W_m\Lambda_{m-1}\cdots\Lambda_1 W_1x}{e_{m,k_m}}\big|
\leq\sum_{(k_0,\ldots,k_{m-1})\in\bK_0\times \dots \times
\bK_{m-1}}|\mu_{k_0,\ldots,k_m}|\abscal{x}{e_{0,k_0}}.
\end{equation}
Setting $y=\sum_{k_0\in\bK_0}\abscal{x}{e_{0,k_0}}e_{0,k_0}$ yields
$\big|\scal{W_m\Lambda_{m-1}\cdots\Lambda_1 W_1x}
{e_{m,k_m}}\big|$\linebreak
$\leq\scal{(A_m\cdots A_1)y}{e_{m,k_m}}$,
and \eqref{e:boundlinnonlinepsb} implies that
$\|W_m\Lambda_{m-1}\cdots\Lambda_1 W_1\|_{\GG_0,\GG_m}$\linebreak
$\leq\|A_m\cdots A_1y\|_{\GG_m}\leq\|A_m\cdots A_1\|_{\GG_0,\GG_m}$,
which concludes the proof.

\end{document}